\newcommand{\side}{\te{side}}
\newcommand{\simw}{\sim_{w}}
\newcommand{\eluw}{\ell(u, w)}
\newcommand{\subs}{\sub S}
\newcommand{\iwj}{\ensuremath{\leftexp{I}W}^{J}}
\newcommand{\iW}{\ensuremath{\leftexp{I}W}}
\newcommand{\wi}{\ensuremath{W_{I}}}
\newcommand{\wj}{\ensuremath{W_{J}}}
\newcommand{\ins}{\in S}
\newcommand{\elx}{\ensuremath{\ell(x)}}
\newcommand{\ely}{\ensuremath{\ell(y)}}
\newcommand{\dlw}{\ensuremath{D_{L}(w)}}
\newcommand{\drw}{\ensuremath{D_{R}(w)}}
\newcommand{\lew}{\le w}
\newcommand{\elw}{\ensuremath{\ell(w)}}
\newcommand{\elu}{\ensuremath{\ell(u)}}
\newcommand{\elv}{\ensuremath{\ell(v)}}
\newcommand{\down}{\downarrow}
\newcommand{\inw}{\ensuremath{\in W}}
\newcommand{\wh}{\ensuremath{\widehat}}
\newcommand{\el}{\ensuremath{\ell}}
\DeclareFontFamily{U}{mathx}{\hyphenchar\font45}
\DeclareFontShape{U}{mathx}{m}{n}{
      <5> <6> <7> <8> <9> <10>
      <10.95> <12> <14.4> <17.28> <20.74> <24.88>
      mathx10
      }{}
\DeclareSymbolFont{mathx}{U}{mathx}{m}{n}
\DeclareMathAccent{\widecheck}{0}{mathx}{"71}
\newcommand{\tw}{\textwidth}
\newcommand{\defeq}{\overset{\text{def}}{=}}
\renewcommand{\kill}[1]{}
\newcommand{\dummy}[1]{\mbox{}}
\newcommand{\xequal}[2][]{\ext@arrow 0055{\equalfill@}{#1}{#2}}
\def\equalfill@{\arrowfill@\Relbar\Relbar\Relbar}
\newcommand{\Set}[2]{\ensuremath{\left\{{#1}\,\middle|\,{#2}\right\}}}
\newcommand{\ku}{\ensuremath{\emptyset}}
\renewcommand{\k}{\ensuremath{\ol{\mathrm{P}}}}
\newcommand{\h}{\hline}
\renewcommand{\k}[1]{\ensuremath{\left({#1}\right)}}
\newcommand{\ds}{\dots}
\newcommand{\bca}{\begin{cases}}
\newcommand{\eca}{\end{cases}}
\newcommand{\bpic}{\begin{picture}}\newcommand{\epic}{\end{picture}}
\newcommand{\beda}{\begin{edaenumerate}}
\newcommand{\eeda}{\end{edaenumerate}}
\newcommand{\cd}{\cdots}
\newcommand{\fn}{\footnotesize}
\newcommand{\q}{\quad}
\newcommand{\up}{\uparrow}
\newcommand{\bq}{\begin{quote}}\newcommand{\eq}{\end{quote}}
\newcommand{\ti}{\times}
\newcommand{\be}{\begin{enumerate}}\newcommand{\ee}{\end{enumerate}}
\newcommand{\bce}{\begin{center}}\newcommand{\ece}{\end{center}}
\newcommand{\bde}{\begin{description}}\newcommand{\ede}{\end{description}}
\newcommand{\bri}{\begin{flushright}}\newcommand{\eri}{\end{flushright}}
\newcommand{\bb}{\begin{block}}\newcommand{\eb}{\end{block}}
\newcommand{\bt}{\begin{thm}}\newcommand{\et}{\end{thm}}
\newcommand{\bpf}{\begin{proof}}\newcommand{\epf}{\end{proof}}
\newcommand{\bex}{\begin{ex}}\newcommand{\eex}{\end{ex}}
\newcommand{\bexr}{\begin{exr}}\newcommand{\eexr}{\end{exr}}
\newcommand{\bft}{\begin{fact}}\newcommand{\eft}{\end{fact}}
\newcommand{\brk}{\begin{rmk}}\newcommand{\erk}{\end{rmk}}
\newcommand{\ba}{\begin{align*}}\newcommand{\ea}{\end{align*}}
\newcommand{\bexe}{\begin{exe}}\newcommand{\eexe}{\end{exe}}
\newcommand{\tn}{\textnormal}
\newcommand{\rank}{\textnormal{rank\,}}
\newcommand{\bit}{\begin{itemize}}\newcommand{\eit}{\end{itemize}}
\newcommand{\bcm}{}
\newcommand{\hf}{\hfill}
\newcommand{\ci}{\CIRCLE}
\newcommand{\bd}{\begin{defn}}\newcommand{\ed}{\end{defn}}
\newcommand{\bp}{\begin{prop}}\newcommand{\ep}{\end{prop}}
\newcommand{\eh}{\emph}
\newcommand{\sub}{\subseteq}
\newcommand{\mb}{\mbox}
\newcommand{\te}{\text}\newcommand{\ph}{\phantom}
\newcommand{\then}{\Longrightarrow}
\newcommand{\leftexp}[2]{{\vphantom{#2}}^{#1}{#2}}
\renewcommand{\up}{\uparrow}
\renewcommand{\int}{\in T}
\renewcommand{\P}{\mathcal{P}}
\theoremstyle{definition}
\newtheorem{thm}{Theorem}[section]
\newtheorem{lem}[thm]{Lemma}
\newtheorem{prop}[thm]{Proposition}
\newtheorem{exr}[thm]{Exercise}
\newtheorem{ob}[thm]{Observation}
\newtheorem{ex}[thm]{Example}
\newtheorem{defn}[thm]{Definition}\newtheorem{rmk}[thm]{Remark}
\newtheorem{fact}[thm]{Fact}
\newtheorem{block}[thm]{}
\newtheorem*{exe}{Exercise}
\title[Bruhat graph 
structure of lower intervals]{
Bruhat order and graph structures of lower intervals
in Coxeter groups}
\author{Masato Kobayashi}
\date{\today}
\address{Department of Engineering\\
Kanagawa University, 3-27-1 Rokkaku-bashi, Yokohama 221-8686, Japan.}
\keywords{Bruhat graph, Bruhat interval, 
Bruhat order, coset, Coxeter group, graded poset, 
Lifting Property, order quotient, 
 Subword Property}%
\subjclass[2010]{Primary:20F55;\,Secondary:51F15}
\email{kobayashi@math.titech.ac.jp}
\begin{document}
\newcommand{\self}{\tn{side}_{w}}
\newcommand{\md}{\tn{mid}}
\newcommand{\ilj}{\ensuremath{\leftexp{I}{\el}^{J}}}
\newcommand{\ipj}{\ensuremath{\leftexp{I}{\mathstrut P}^{J}}}
\newcommand{\pd}{P_{\down}}
\begin{abstract} 
We show that any lower Bruhat interval in a Coxeter group is a disjoint union of certain two-sided cosets
as a consequence of Lifting Property and Subword Property.
Furthermore, we describe these details in terms of 
Bruhat graphs, graded posets, and two-sided quotients 
altogether.  For this purpose, we introduce some new ideas, quotient lower intervals and quotient Bruhat graphs.
\end{abstract}
\maketitle
\tableofcontents

\section{
Preliminaries}
Throughout this article, we denote by $W=(W, S, T, \el)$ a Coxeter system with $W$ the underlying Coxeter group and $S$ its Coxeter generators. 
By $T$ we mean the set of its reflections; $\el$ is the length function.
Unless otherwise noticed below, symbols $u, v, w, x, y$ are elements of $W$, $s\in S$, $t\in T$, $e$ is the group-theoretic unit of $W$ and $I, J$ are subsets of $S$.
See Appendix for more details on those definitions and notation.

\subsection{Bruhat graph}

By $u\to v$ we mean $\elu<\elv$ and $v=tu$ for some $t\in T$.
Define \eh{Bruhat order} $u\le w$ on $W$ if there is a directed path
\[
u=v_{0}\to v_{1}\to v_{2}\to \cd \to v_{k}=w.\]
For convenience, we use notation 
\[
\el(u, w)\defeq\el(w)-\elu\]
whenever $u\le w$.

\bd{The \emph{Bruhat graph} of $W$ is a directed graph for vertices $w\in W$ and for edges $u\to v$. 
}\ed 

For each subset $V\sub W$, we can also consider the induced subgraph with the vertex set $V$ (Bruhat subgraph).




\subsection{main results}

Any subset $V$ of $W$ in the form 
\[
[u, w]
\defeq
\{v\in W\mid u\le v\le w\} 
\]
is called a \eh{Bruhat interval}. 
In this article, we mainly study \eh{lower intervals}:
\[
B(w)
\defeq [e, w]=\{v\inw\mid v\lew\}.
\]
Although some results on such intervals are known in the books \cite{bb,billey}, there are still many unsolved problems.

Our main results (Theorems \ref{mth1} and \ref{mth2}) give a new description of poset structures $B(w)$ from aspects of Bruhat graphs and two-sided cosets/quotients.

\section{Quotient lower interval}

\subsection{two-sided cosets and quotients}

First, we discuss two-sided cosets and quotients, 
a little unfamiliar topic in the context of Coxeter groups; 
for one-sided cosets and quotients, see Appendix.




Let $V$ be a subset of $W$.
\begin{defn}
Say $V$ is a \eh{two-sided coset} if 
\[
V=W_{I}uW_{J}\defeq\{xuy\mid x\in \wi, y\in\wj\}\]
for some $u, I, J$.
\end{defn}

Every one-sided coset is a special case of two-sided cosets:
\[
W_{I}uW_{\ku}=W_{I}u  \,\text{   and   }\, W_{\ku}uW_{J}=uW_{J}.
\]
%

\begin{defn}[two-sided quotient]
$\iwj\defeq\iW\cap W^{J}$.
\end{defn}
Similarly, every one-sided quotient is a special case of 
two-sided quotients:
\[
{}^{\ku}W^{J}=W^{J} \,\text{   and   }\, {}^{I}W^{\ku}={}^{I}W.
\]

\begin{rmk}
Here, we would like to make sure some difference between one-sided 
cosets/quotients and two-sided ones.

\begin{itemize}
	\item Unlike one-sided cosets, 
	there is a choice of indices for an expression of a two-sided coset:
	it is possible that 
$W_{I}uW_{J}=W_{I'}uW_{J'}$ for some $(I, J)\ne (I', J')$.
	\item As analogy of Fact \ref{coset}, 
it is tempting to say that for each $v\in W_{I}uW_{J}$, 
there exists a unique pair $(x, y)\in\wi\ti \wj$ such that 
\begin{align*}
	v&=xuy,
	\\\el(v)&=\el(x)+\el(u)+\el(y).
\end{align*}
But this uniqueness is \eh{not} always true; it is possible that 
there exists a pair $(x', y')\in\wi\ti \wj$
such that 
\begin{align*}
	v&=x'uy',
	\\\el(v)&=\el(x')+\el(u)+\el(y')
\end{align*}
and moreover $(x', y')\ne (x, y)$.
However, it makes sense to speak of 
the minimal and maximal length coset representatives of $W_{I}uW_{J}$, say 
$v_{0}$ and $v_{1}$:
\[
	\el(v_{0})\le \elv\le \el (v_{1})
\]
for all $v\in W_{I}uW_{J}$.
Indeed, $W_{I}uW_{J}$ is a Bruhat interval $[v_{0}, v_{1}]$ and hence a 
graded poset of rank $\el(v_{0}, v_{1})$;
moreover, its Bruhat graph is $\el(v_{0}, v_{1})$-regular.

	\item 
Unlike one-sided quotients, a two-sided quotient $(\iwj, \le)$ is not necessarily graded with the rank function $\el$
(it may be graded with some other rank function, though);
that is, there may exist 
$u$, $w\in {}^{I}W^{J}$ such that 
\[
u< w,\q \el(u, w)\ge 2\]
and there does not exist $v\in {}^{I}W^{J}$ such that $u< v< w$.
\end{itemize}
\end{rmk}

%

\subsection{Bruhat cosets}

Let $B(w)\defeq [e, w]$, the lower interval determined by $w$.
Below, we study details on the graph structure of this interval in terms of our new idea, Bruhat cosets:

\bd{
A \emph{Bruhat coset} of $w$ is a two-sided coset of the form
$W_{D_L(w)}uW_{D_R(w)}$ for some $u\in B(w)$.
Denote this coset by $C_w(u)$ for simplicity.
}\ed
In particular, call $C_w(e)$ the \emph{bottom coset} and $C_w(w)$ the \emph{top coset}.

Several combinatorial properties of 
Bruhat cosets are in order.

\bp{We have $C_w(u)\subseteq B(w)$ for each $u\in B(w)$.
}\ep

\bpf{If $u\in B(w)$ and $s\in D_L(w)$, then $su$ stays in $B(w)$ because of Lifting Property (it does not matter whether $s\in D_L(u)$ or not).
As a consequence, if $x\in W_{D_L(w)}$, then $xu\in B(w)$. The same is true on right.
}\epf

Write $u\sim_{w}v$ if 
$C_{w}(u)=C_{w}(v)$; thus, 
$\sim_{w}$ defines an equivalent class on $B(w)$. 
In other words, Bruhat cosets of $w$ $\{C_{w}(u)\mid u\in B(w)\}$ give a partition of $B(w)$:
\[
B(w)=\bigcup_{u\in B(w)} C_{w}(u).
\]
But, this is just a set-theoretical result.
We will go into more details of this partition 
with the ideas of graded posets and Bruhat graphs.

\subsection{quotient lower interval}

\begin{defn}
Let $u\in B(w)$. We say that $(u, w)$ is a \eh{critical pair} if 
\[
D_L(w)\subseteq D_L(u) \,\text{   and   }\, D_R(w)\subseteq D_R(u).
\]
(It seems that this terminology ``critical pair" sometimes appears only in the context of \eh{Kazhdan-Lusztig polynomials})
\end{defn}
\begin{ex}
We work on the type $A_{3}$ Coxeter group ($s_{i}$ is the transposition of $i$ and $i+1$).
For example, $(u, w)=(1324, 3412)=(s_{2}, s_{2}s_{1}s_{3}s_{2})$ is a critical pair with 
\[
\dlw=D_{L}(u)=\{s_{2}\}, \q
\drw=D_{R}(u)=\{s_{2}\}.
\]
Another example is $(u, w)=(14325, 45312)=(s_{2}s_{3}s_{2}, 
s_{2}s_{1}s_{3}s_{2}s_{1}s_{4}s_{3}s_{2})$ with 
\[
\dlw=D_{L}(u)=\{s_{2}, s_{3}\},\q 
\drw=D_{R}(u)=\{s_{2}, s_{3}\}.
\]
\end{ex}

\bd{
Let
\begin{align*}
B^\uparrow(w)&=
\{u\in B(w) \mid (u, w) \text{ is a critical pair}\}
,\\
B_\downarrow(w)&=B(w)\cap {}^{D_{L}(w)}W^{D_{R}(w)}.
\end{align*}
}\ed

Note that each $u \in B^\uparrow(w)$ ($B_\downarrow(w)$) is the maximal (minimal) length coset representative of $C_w(u)$.
Note also that 
$w\in B^{\up}(w)$ and $e\in B_{\down}(w)$
so that these sets are nonempty.

\bd{Define projections 
$P^\up:B(w)\to B^\uparrow(w)$ and 
$P_{\down}:B(w)\to B_\down(w)$ by
\[
P^\up(u)=\max C_{w}(u), \q 
P_{\down}(u)=\min C_{w}(u).
\]
(these $P^\up, P_{\down}$ indeed depend on $w$; however, we suppress the letter $w$ for simplicity)
}\ed


\bp{\label{rep}
Both of $P^\up, P_\down$ are surjections and weakly order-preserving in Bruhat order. In other words, in $B(w)$, we have 
\[
u\le v \Longrightarrow P^\up(u)\le P^\up(v) \iff P_\down(u)\le P_\down(v).\]
As a consequence, two subposets $B^\uparrow(w)$ and $B_\downarrow(w)$ in $B(w)$ \eh{are} isomorphic as posets.
}\ep

Let $C(w)=\{C_w(u)\mid u\in B(w)\}$. Whenever no confusion will arise, define projections 
with the same symbols 
$P^\up:C(w)\to B^\uparrow(w)$ and 
$P_{\down}:C(w)\to B_\down(w)$ by
\[
P^\up(C)=\max C, \q P_{\down}(C)=\min C.
\]

\bd{For $C, D\in C(w)$, define 
the \eh{quotient Bruhat order} $C\le D$ if $P^\up(C)\le P^\up(D)$ (equivalently, 
$P_\down(C)\le P_\down(D)$).
Call $(C(w), \le)$ the \emph{quotient lower interval} of $w$. 
}\ed

As this name suggested, $(C(w), \le)$ is indeed an interval, i.e., it has 
both the maximum and minimum.
Observe now that 
\[
(C(w), \le)\cong (B^\uparrow(w),\le) \cong (B_\downarrow(w), \le)
\]
as posets.

\subsection{side, middle length}

\begin{defn}
Let $u\in B(w)$.
The \eh{middle length} of $u$ under $w$ is 
\[
\md_{w}(u)=\el(\pd(u)).
\]
The \eh{side length} of $u$ under $w$ is 
\[
\side_{w}(u)=\el(u)-\el(\pd(u)).
\]
\end{defn}
Observe that $\md_{w}$ is weakly increasing:
\[
u\le v \then \md_{w}(u)\le\md_{w}(v).
\]
(while $\side_{w}$ is increasing only in the same Bruhat coset of $w$:
\[
u\le v, u\sim_{w}v \then \side_{w}(u)\le\side_{w}(v))
\]
\begin{defn}
The \emph{middle length, side length, length} of $C\in C(w)$ are 
\begin{align*}
	\tn{mid}_{w}(C)&=\md_{w}(P_{\down} (C)),
	\\\tn{side}_{w}(C)&=\side_{w}(P^{\up} (C)),
	\\\el(C)&=\tn{mid}_{w}(C)+\tn{side}_{w}(C).
\end{align*}
\end{defn}

Consequently, we also have 
\[
C\le D \then \md_{w}(C)\le\md_{w}(D).
\]
(while 
\[
C\le D \then \side_{w}(C)\le\side_{w}(D)
\]
is not always true)


\begin{ex}
Let us see the Bruhat cosets of $w=3412=s_{2}s_{1}s_{3}s_{2}$ 
with $\dlw=\{s_{2}\}=\drw$ in Figure \ref{3412}; $B(w)$ consists of 14 elements with 
\begin{align*}
	B^\uparrow(w)&=\{3412, 3214, 1432,1324\}
	=\{s_{2}s_{1}s_{3}s_{2}, s_{2}s_{1}s_{2}, s_{2}s_{3}s_{2}, s_{2}\},
	\\B_\downarrow(w)&=\{2143, 2134, 1243, 1234\}
	=\{s_{1}s_{3}, s_{1}, s_{3}, e\}.
\end{align*}

\begin{figure}
\caption{$B(3412), B^{\up}(3412), B_{\down}(3412)$}
\label{3412}
\begin{center}
\resizebox{.95\tw}{!}{
\xymatrix@C=3mm@R=23mm{
&
&&
&&
*+{\fboxrule3pt
\fcolorbox[gray]{0}{0.85}
{3412}}\ar@{-}[dr]\ar@{-}[dlllll]\ar@{-}[drrrrr]
&&
&&
&\\
*+{\fboxrule3pt
\fcolorbox[gray]{0}{0.85}
{3214}}&&
&&3142\ar@{-}[ru]\ar@{-}[dr]
&&
*+{2413}\ar@{-}[dlllll]\ar@{-}[drrr]
&&
&&
*+{\fboxrule3pt
\fcolorbox[gray]{0}{0.85}
{1432}}\\
&*+{{2314}}\ar@{-}[lu]\ar@{-}[drrrr]
&&3124\ar@{-}[lllu]\ar@{-}[ru]\ar@{-}[d]
&&
*+{\fboxrule1.5pt
\fcolorbox[gray]{0}{0.9}
{2143}}\ar@{-}[ru]&&
1342\ar@{-}[lllu]\ar@{-}[rrru]\ar@{-}[d]
&&
*+{{1423}}\ar@{-}[ru]\ar@{-}[dllll]&\\
&&&*+{\fboxrule1.5pt
\fcolorbox[gray]{0}{0.9}
{2134}}\ar@{-}[llu]\ar@{-}[rru]&&
*+{\fboxrule3pt
\fcolorbox[gray]{0}{0.85}
{1324}}\ar@{-}[llu]\ar@{-}[rru]&&
*+{\fboxrule1.5pt
\fcolorbox[gray]{0}{0.9}
{1243}}\ar@{-}[llu]\ar@{-}[rru]&&&\\
&&&&&*+{\fboxrule1.5pt
\fcolorbox[gray]{0}{0.9}
{1234}}\ar@{-}[llu]\ar@{-}[u]\ar@{-}[rru]&&&&&\\
}}
\end{center}
\end{figure}

Both of $B^{\uparrow}(w)$ and $B_{\downarrow}(w)$ are isomorphic to 
the following graded poset of rank 2:
\[
\begin{minipage}[c]{.4\tw}
\begin{xy}
0;<1mm,0mm>:
,(20,10)*{\ci}
,(30,0)*{\ci}
,(30,20)*{\ci}
,(40,10)*{\ci}
\ar@{-}(20,10);(30,0);
\ar@{-}(20,10);(30,20);
\ar@{-}(30,0);(40,10);
\ar@{-}(30,20);(40,10);
\end{xy}
\end{minipage}
\]
\end{ex}

{\renewcommand{\arraystretch}{1.5}
\begin{table}[htp]
\caption{$C(w)$ for $w=3412$}
\label{tab1}
\begin{center}
	\begin{tabular}{cccccccccc}\h
	$C$&$P^{\up}{(C)}$&$\el(C)$	&$P_{\down}(C)$&
	$\md_{w}(C)$&$\side_{w}(C)$\\\h
	$\{3412, 3142, 2413, 2143\}$&3412&4&2143&2&2\\\h
		$\{3214, 2314, 3124, 2134\}$&3214&3&2134&1&2	\\\h
	$\{1432, 1342, 1423, 1243\}$&1432&3&1243&1&2\\\h
	$\{1324, 1234\}$&1324&1&1234&0&1\\\h
\end{tabular}
\end{center}
\end{table}}

Thus, $C(w)$ consists of four cosets; 
three of them are of degree 2 while 
the rest is of degree 1 (note that $\deg(C)=\side_{w}(C)$).
They are all graded posets themselves; 
however, except the bottom coset, they do not contain the minimum element $e$ of $B(w)$; see Table \ref{tab1}.

To describe such details on subposets in a graded poset, we need to introduce some new terminology as below.

%

\subsection{subposets in a graded poset}

Recall that a poset $(P,\le)$ is \eh{graded} if 
there exists some function
$r:P\to\{0,1,2,\ds, n\}$ such that 
\begin{enumerate}
	\item there exists $\wh{0}_{P}=\wh{0}\in P$
	 such that $\wh{0}\le x$ for all $x\in P$,
	\item $r(\wh{0})=0$ and $n=\max_{x\in P} r(x)$,
	\item all maximal chains in $[\wh{0}, x]$
	have the same length.
\end{enumerate}

In this case, we say that 
$(P,\le, r)$ is a graded poset of rank $n$.

We must be very careful when we talk about subposets of $(P, \le, r)$.
Let $Q$ be a nonempty subset of $P$. 
Then, naturally $(Q, \le)$ is a subposet of $(P, \le)$.
It may or may not be graded with the rank function $r$ of $P$, though.

\begin{defn}
Say $(Q,\le)$ is a \eh{faithful subposet} of $(P,\le, r)$ if 
$(Q, \le, r)$ is a graded poset.
\end{defn}
In this case, it is necessary $\wh{0}\in Q$. So a little weaker idea is this:
\begin{defn}
Say $(Q,\le)$ is an \eh{almost faithful subposet} of $(P,\le, r)$ if 
there exists some function $r_{Q}:Q\to\{0, 1, 2, \ds, n_{Q}\}$ such that 
\begin{enumerate}
	\item $(Q, \le, r_{Q})$ is graded itself,
	\item $r_{Q}(x)=r_{P}(x)-n_{Q}\, (x\in Q)$ for some nonnegative integer $n_{Q}$.
\end{enumerate}
\end{defn}
In this case, there exists a unique $\wh{0}_{Q}\in Q$ such that $r_{Q}(\wh{0}_{Q})=0$ and $r_{P}(\wh{0}_{Q})=n_{Q}$.



%

\subsection{main theorem 1}

Now, we summarize all the our results above as a Theorem with remarks.

\begin{thm}
\label{mth1}
Every lower interval $B(w)$ is
 a disjoint union of two-sided cosets 
parametrized by the poset $(C(w), \le)$, the quotient lower interval of $w$,
as \[
B(w)=\bigcup_{C\in C(w)} C
\]
with all of the following:
\begin{itemize}
\item The poset $(C(w), \le)$ is an interval.
\item Each Bruhat coset $C$ of $w$ is a subinterval of $(B(w), \le)$.
 Thus, $(C, \le , \side_{w})$ is an almost faithful subposet of 
rank \,$\side_{w}(C)$. 
In particular, the bottom coset is a faithful subposet of $(B(w), \le, \el)$.
\item As a directed graph, each $C$ is $\side_{w}(C)$-regular.
\item There are nonempty subposets
$(B_{\down}(w), \le)$ and $(B^{\up}(w), \le)$ in $(B(w),\le,\el)$ such that 
\[
B_{\down}(w)=\min\{C\mid C\in C(w)\}, \q 
B^{\up}(w)=\max\{C\mid C\in C(w)\}
\]
and $B_{\down}(w)$ are isomorphic to $B^{\up}(w)$ as posets.
They both parametrize the partition in the following sense:
\[
B(w)=\bigcup_{u\in B_{\down}(w)} C_{w}(u)
=\bigcup_{v\in B^{\up}(w)} C_{w}(v).
\]
\item 
There is a natural partition of $\el$, the rank function of $(B(w), \le)$, by two functions both of which take nonnegative integers:
\[
\el(u)=\md_{w}(u)+\side_{w}(u), \q u\in B(w),
\]
\[
\md_{w}(u)\ge0, \q \side_{w}(u)\ge 0.
\]
Moreover, the function 
$\md_{w}:B(w)\to \{0 ,1, 2, \ds, \md_{w}(w)\}$ is weakly increasing.
\end{itemize}
\end{thm}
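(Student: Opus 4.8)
The plan is to read each bullet off a result already in hand, since Theorem \ref{mth1} is a consolidation rather than a fresh argument. The set-theoretic decomposition $B(w)=\bigcup_{C\in C(w)}C$ is precisely the partition of $B(w)$ into $\sim_{w}$-classes, and the Proposition establishing $C_{w}(u)\subseteq B(w)$ guarantees that each block sits inside $B(w)$. That $(C(w),\le)$ is an interval, with bottom $C_{w}(e)$ and top $C_{w}(w)$, together with the isomorphisms $(C(w),\le)\cong(B^{\up}(w),\le)\cong(B_{\down}(w),\le)$, is already recorded, and Proposition \ref{rep} supplies that $P^{\up},P_{\down}$ are weakly order-preserving surjections and that $B_{\down}(w)\cong B^{\up}(w)$. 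So the first, second, and fifth bullets need only be reassembled from these statements, identifying $B_{\down}(w)$ and $B^{\up}(w)$ with $\min\{C\mid C\in C(w)\}$ and $\max\{C\mid C\in C(w)\}$ through the projections $P_{\down},P^{\up}$.

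The core structural step is the third and fourth bullets. First I would invoke the Remark on two-sided cosets: each $C=W_{\dlw}uW_{\drw}$ is a two-sided coset, hence a Bruhat interval $[v_{0},v_{1}]$ in $W$ that is graded of rank $\el(v_{0},v_{1})$ and whose Bruhat graph is $\el(v_{0},v_{1})$-regular, where $v_{0}=P_{\down}(C)$ and $v_{1}=P^{\up}(C)$. Since $C\subseteq B(w)$ and $v_{1}\le w$, this interval lies inside $B(w)$ and is convex there, so $C$ is a subinterval of $(B(w),\le)$ and its induced Bruhat subgraph is $\el(v_{0},v_{1})$-regular. To see that $(C,\le,\side_{w})$ is an almost faithful subposet of rank $\side_{w}(C)$, I would use that $P_{\down}$ is constant on the $\sim_{w}$-class $C$, so $\md_{w}=\el(P_{\down}(\cdot))$ equals the constant $\el(v_{0})$ there; combined with the decomposition $\el=\md_{w}+\side_{w}$ this gives $\side_{w}=\el-\el(v_{0})$ on $C$, i.e.\ the intrinsic grading $\side_{w}$ differs from the ambient rank $\el$ by the single nonnegative constant $n_{C}=\el(v_{0})=\md_{w}(C)$. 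Finally $\side_{w}(C)=\side_{w}(v_{1})=\el(v_{1})-\el(v_{0})=\el(v_{0},v_{1})$, so the rank of the almost faithful subposet, the grading degree, and the regularity degree all coincide; the bottom coset has $v_{0}=e$, whence $n_{C}=0$ and it is a genuine faithful subposet of $(B(w),\le,\el)$.

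The last bullet is then immediate. The identity $\el(u)=\md_{w}(u)+\side_{w}(u)$ is the definition of the two lengths, $\md_{w}(u)=\el(P_{\down}(u))\ge0$ is a length, and $\side_{w}(u)=\el(u)-\el(P_{\down}(u))\ge0$ because $P_{\down}(u)\le u$ forces $\el(P_{\down}(u))\le\el(u)$; the weak monotonicity of $\md_{w}$ was already observed and follows from $P_{\down}$ being order-preserving (Proposition \ref{rep}).

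I expect the main obstacle to be the reconciliation of the two rank functions in the third bullet: proving that, on each coset, the intrinsic grading $\side_{w}$ is not merely \emph{some} grading but is exactly the ambient $\el$ shifted by a global constant. This is what upgrades ``graded subposet'' to ``almost faithful subposet,'' and it is the one place where one must combine the constancy of $P_{\down}$ on a class with the additivity $\el=\md_{w}+\side_{w}$ and with the interval description $C=[v_{0},v_{1}]$ from the Remark; every remaining bullet is a direct citation.
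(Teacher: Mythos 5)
Your proposal is correct and takes essentially the same route as the paper: Theorem \ref{mth1} is presented there with no separate proof, explicitly as a summary of the preceding results (the proposition that $C_{w}(u)\subseteq B(w)$, Proposition \ref{rep}, the remark that a two-sided coset is a graded Bruhat interval $[v_{0}, v_{1}]$ with $\el(v_{0}, v_{1})$-regular Bruhat graph, and the observations on $\md_{w}$ and $\side_{w}$), which is precisely the assembly you carry out. Your added detail — using the constancy of $P_{\down}$ on each coset together with $\el=\md_{w}+\side_{w}$ to show $\side_{w}$ is the ambient rank shifted by $\md_{w}(C)$, hence ``almost faithful'' — is a correct filling-in of what the paper leaves implicit.
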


\begin{rmk}
\hf
\begin{itemize}
	\item 
	Note that 
$C_{w}(u)=
W_{\dlw}uW_{\drw}$
and 
$C_{w}(v)=
W_{\dlw}vW_{\drw}$
may or may not be isomorphic as posets
even though these two-sided cosets have expressions of the same two-sided indices. This is a phenomenon totally different from one-sided cosets (Fact \ref{cosetpart}).
\item Some of posets $(B_{\down}(w), \le)$ or $(B^{\up}(w), \le)$ may or may not be graded.
\item Let $Q_{1}, Q_{2}$ be subsets of a graded poset $P$. 
We remark that all of the following can happen simultaneously:
	\begin{itemize}
	\item[(1)] $(Q_{1}, \le)$ is a faithful subposet of $(P, \le, r)$ while $(Q_{2},\le)$ is not.
	\item[(2)] $(Q_{2}, \le)$ is graded with some other rank function.
	\item[(3)] $(Q_{1}, \le)$ and $(Q_{2}, \le)$ are isomorphic as posets.
\end{itemize}
We will see examples in the next section with $P=B(w)$, $Q_{1}=B_{\down}(w)$ and $Q_{2}=B^{\up}(w)$ for some $w$.
\end{itemize}
\end{rmk}

%
%

%





\section{Quotient Bruhat graph}

\subsection{separated element}

Next, we study quotient lower intervals in some simplest cases. 
Let 
\[
S(w)=\{s\in S\mid s\le w\}.
\]
\begin{defn}
We say that $w$ is \eh{separated} if 
\[
\dlw\cap S(\pd(w))=
\drw\cap S(\pd(w))=\ku.
\]
\end{defn}
In this case, any $u\in [e, \pd(w)]$ satisfies
\[
s\in \dlw \then \el(su)>\elu
\]
and
\[
s\in \drw \then \el(us)>\elu.
\]
As a result, if $w$ is separated, then 
\[
B_{\down}(w)={}^{\dlw}[e, \pd(w)]^{\drw}=[e, \pd(w)].
\]
\begin{prop}
If $w$ is separated, then 
$(B_{\down}(w), \le, \el)$ is a faithful subposet of $(B(w), \le, \el)$.
\end{prop}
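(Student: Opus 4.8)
The plan is to reduce everything to the standard fact that a lower Bruhat interval is graded by $\el$, so that the separated hypothesis does all the work by collapsing $B_{\down}(w)$ into such an interval.

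First I would invoke the identity $B_{\down}(w) = [e, \pd(w)]$ recorded just above the statement, which is precisely the consequence of $w$ being separated: for $u \in [e, \pd(w)]$ and $s \in \dlw$ separatedness forces $\el(su) > \el(u)$, and symmetrically $\el(us) > \el(u)$ for $s \in \drw$, so every element of $[e, \pd(w)]$ is already a minimal two-sided coset representative, i.e.\ lies in ${}^{\dlw}W^{\drw}$; combined with $\pd(w) \le w$, so that $[e, \pd(w)] \sub B(w)$, this yields the claimed equality. Since this equality is established before the proposition, I would simply cite it and take $B_{\down}(w) = [e, \pd(w)]$ as the starting point.

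Next I would observe that $[e, \pd(w)]$, being a lower interval, is a graded poset with rank function $\el$: Bruhat order is graded by length, so for every $x \le \pd(w)$ all maximal chains from $e$ to $x$ have length $\el(x) - \el(e) = \el(x)$. The one point worth checking is that maximal chains computed inside the subposet $B_{\down}(w)$ coincide with those in the ambient Bruhat order; this is immediate because $B_{\down}(w) = [e, \pd(w)]$ is an order ideal below $\pd(w)$, so for $x \le \pd(w)$ the entire interval $[e, x]$ already sits inside $B_{\down}(w)$, and no maximal chain can escape it.

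Finally I would verify the three defining conditions for $(B_{\down}(w), \le, \el)$ to be graded: the minimum is $e \in B_{\down}(w)$ with $\el(e) = 0$, and the common chain-length property just noted supplies $\el$ itself as the rank function. This is exactly the assertion that $(B_{\down}(w), \le, \el)$ is a faithful subposet of $(B(w), \le, \el)$. There is no serious obstacle once the separated hypothesis is unwound into $B_{\down}(w) = [e, \pd(w)]$; the only mild subtlety is confirming that gradedness is inherited with the unshifted rank function $\el$ rather than a translate of it, and this holds here precisely because $e$ itself belongs to $B_{\down}(w)$, so that $\wh{0}_{B_{\down}(w)} = e$ has rank $0$.
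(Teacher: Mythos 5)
Your proposal is correct and follows exactly the paper's own argument: both reduce the claim to the previously established identity $B_{\down}(w)=[e,\pd(w)]$ and then invoke the fact that a lower Bruhat interval is graded by $\el$. Your write-up merely spells out the chain-coincidence and normalization details that the paper's one-line proof leaves implicit.
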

\begin{proof}
$B_{\down}(w)={}^{\dlw}[e, \pd(w)]^{\drw}=[e, \pd(w)]$ 
is just a lower subinterval of $B(w)$ and hence is graded.
\end{proof}

\begin{ex}
On the one hand, all of $w=3412, 45312, 52341$
are separated since 
\begin{align*}
	3412&=s_{2}|s_{1}s_{3}|s_{2},
	\\45312&=s_{2}s_{3}s_{2}|s_{1}s_{4}|s_{2}s_{3}s_{2},
	\\52341&=s_{1}s_{4}|s_{2}s_{3}s_{2}|
s_{1}s_{4}
\end{align*}
here $|\,\cdots\,|$ indicates $P_{\down}(w)$
 for convenience.
On the other hand, $w=456123$ is not separated since 
\[
456123=
s_{3}
|s_{2}s_{1}s_{4}s_{3}s_{2}s_{5}s_{4}|s_{3}
\]
with $\dlw\cap S(
s_{2}s_{1}s_{4}s_{3}s_{2}s_{5}s_{4})=
\{s_{3}\}\ne \ku$.
\end{ex}

We have already seen that 
the function $\md_{w}$ on $B(w)$ is weakly increasing. We can say more with our assumption:
\begin{prop}
If $w$ is separated and $u\le v$ in $B(w)$, then 
\[
\side_{w}(u)\le \side_{w}(v).
\]
\end{prop}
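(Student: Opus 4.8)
The plan is to produce, for every element of $B(w)$, a reduced ``tripartite'' factorization adapted to $w$, and then read off $\side_{w}$ directly from the two outer blocks. Write $I=\dlw$, $J=\drw$ and $K=S(\pd(w))=\supp\pd(w)$; separatedness is precisely the statement $I\cap K=J\cap K=\ku$. First I would fix an additive factorization $w=a\,\pd(w)\,b$ with $a\in\wi$, $b\in\wj$ and $\el(w)=\el(a)+\el(\pd(w))+\el(b)$ (such a factorization exists because $\pd(w)=\min C_{w}(w)$ is the minimal-length representative of the double coset $C_{w}(w)=W_{I}\,\pd(w)\,W_{J}$, as discussed in the Remark), and then concatenate reduced words for $a$, $\pd(w)$, $b$ to obtain a reduced word $\mathbf{a}\mathbf{p}\mathbf{b}$ for $w$ whose three blocks use only letters of $I$, $K$, $J$ respectively.

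The key lemma I would prove is the following: \emph{for every $v\in B(w)$ there is a reduced factorization $v=a_{v}p_{v}b_{v}$ with $a_{v}\in\wi$, $p_{v}\in W_{K}$, $b_{v}\in\wj$ and additive lengths, for which $\pd(v)=p_{v}$, and hence $\side_{w}(v)=\el(a_{v})+\el(b_{v})$.} To get this, I apply the Subword Property to $v\le w$: choose a reduced subword of $\mathbf{a}\mathbf{p}\mathbf{b}$ equal to $v$, and let $a_{v},p_{v},b_{v}$ be the products of the selected letters lying in the $I$-, $K$-, $J$-blocks. Each of these is a contiguous factor of a reduced word for $v$, hence reduced, so the three lengths add. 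The step where separatedness enters is the identification $\pd(v)=p_{v}$: because $\supp p_{v}\sub K$ is disjoint from $I$ and from $J$, the element $p_{v}$ has no left descent in $I$ and no right descent in $J$, so $p_{v}\in\iwj$ is the minimal representative of $W_{I}p_{v}W_{J}=C_{w}(v)$. Therefore $\pd(v)=p_{v}$ and $\side_{w}(v)=\el(v)-\el(p_{v})=\el(a_{v})+\el(b_{v})$. The factorization itself need not be unique, but $\side_{w}(v)$ is, so this ambiguity is harmless.

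With the lemma in hand the monotonicity follows quickly. Given $u\le v$ in $B(w)$, I regard $\mathbf{a}_{v}\mathbf{p}_{v}\mathbf{b}_{v}$ as a reduced word for $v$ and apply the Subword Property once more to $u\le v$: a reduced subword of it equal to $u$ splits, block by block, as $u=a_{u}p_{u}b_{u}$ with $a_{u}\in\wi$ a subword of $\mathbf{a}_{v}$, $p_{u}\in W_{K}$ a subword of $\mathbf{p}_{v}$, and $b_{u}\in\wj$ a subword of $\mathbf{b}_{v}$. The lemma applies verbatim to this factorization (again $\supp p_{u}\sub K$), giving $\side_{w}(u)=\el(a_{u})+\el(b_{u})$. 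Since $a_{u}$ is a subword of $\mathbf{a}_{v}$ and $b_{u}$ of $\mathbf{b}_{v}$, I have $\el(a_{u})\le\el(a_{v})$ and $\el(b_{u})\le\el(b_{v})$, whence $\side_{w}(u)\le\side_{w}(v)$.

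The main obstacle is precisely the identification $\pd(v)=p_{v}$ inside the key lemma: this is the one place where separatedness is used, and it is exactly what fails without it. If $I\cap K$ or $J\cap K$ were nonempty, the ``middle'' letters of a subword could themselves carry a descent in $I$ or in $J$ and would be absorbed into a side block during the passage to the minimal double-coset representative, so that $\el(a_{v})+\el(b_{v})$ would no longer compute $\side_{w}(v)$ and the clean length comparison in the last paragraph would break down.
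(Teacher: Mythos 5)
Your proof is correct, but it follows a genuinely different route from the paper's. The paper first invokes the gradedness of $(B(w),\le,\el)$ to reduce to a single covering relation $u\to v$ with $\el(u,v)=1$, then argues by cases: if $u\simw v$, then $\pd(u)=\pd(v)$ and $\side_{w}$ jumps by exactly $1$; if $u\not\simw v$, then by Fact \ref{esub} the element $u$ is obtained from a block reduced word $v=a_{1}\cdots a_{l}b_{1}\cdots b_{m}c_{1}\cdots c_{n}$ by deleting one middle letter, and separatedness identifies $b_{1}\cdots \wh{b_{j}}\cdots b_{m}$ as $\pd(u)$, so $\side_{w}$ is unchanged. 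You avoid both the reduction to covers and the case analysis: you apply the Subword Property twice (to $v\le w$, then to $u\le v$ using the block word $\mathbf{a}_{v}\mathbf{p}_{v}\mathbf{b}_{v}$) and compare the outer blocks directly, $\el(a_{u})+\el(b_{u})\le\el(a_{v})+\el(b_{v})$. The mechanism by which separatedness enters is identical in both proofs --- an element supported in $S(\pd(w))$ has no left descent in $\dlw$ and no right descent in $\drw$, hence lies in $\iwj$ and is the minimal representative of its double coset --- and both rely on the same background facts (the additive factorization through the minimal representative, and minimality of elements of $\iwj$ in their double cosets, which the paper asserts but does not prove). Two points worth flagging: you use the strong form of the Subword Property (existence of a \emph{reduced} subword), which is standard \cite[Thm.~2.2.2]{bb} but stronger than the Fact as stated in the appendix, which only guarantees a not-necessarily-reduced subword; and your key lemma actually yields more than the statement asks, since $p_{u}$ is a reduced subword of $\mathbf{p}_{v}$, so you also get $\md_{w}(u)\le\md_{w}(v)$ for free under separatedness. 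What your route gives up is the finer edge-by-edge information in the paper's proof (along any cover exactly one of $\md_{w}$, $\side_{w}$ increases, and by exactly $1$), which the paper reuses for the subsequent lemma on the quotient Bruhat graph; what it buys is a shorter, more uniform argument that needs neither the gradedness of $B(w)$ nor a case distinction.
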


\begin{proof}
Since $(B(w), \le, \el)$ is a graded poset, 
it is enough to prove this for $u\to v$, $\el(u, v)=1$.
Write 
\[
v=x\pd (v)y, \q x\in W_{\dlw}, y\in W_{\drw}
\]
with $\elv=\elx+\el(\pd(v))+\ely$.
\begin{itemize}
	\item Case 1: If $u\simw v$, then $\pd(u)=\pd(v)$ so that 
\[
1=\el(u, v)=\md_{w}(u, v)+\side_{w}(u, v)=0+\side_{w}(u, v)=\side_{w}(u, v).
\]
Thus, 
\[
\side_{w}(u)=\side_{w}(v)-1<\side_{w}(v).
\]
\item Case 2: Suppose $u\not\simw v$.
There exists a reduced expression
\[
v=a_{1}\cd a_{l}
b_{1}\cd b_{m}c_{1}\cd c_{n}
\]
for some $a_{i}, b_{i}, c_{i}\in S$ such that 
$a_{i}\in \dlw, c_{i}\in \drw$ for all $i$ and 
\[
b_{1}\cd b_{m}=\pd(v)
\text{ (reduced)}.
\]
Since $u\to v$ and $u\not\simw v$, we have 
\[
u=a_{1}\cd a_{l}
b_{1}\cd \wh{b_{j}}\cd b_{m}c_{1}\cd c_{n}
\]
for some $j$ (Fact \ref{esub}).
Now, this word must be also reduced 
since $\el(u,v)=1$.
This implies $\pd(u)=b_{1}\cd \wh{b_{j}}\cd b_{m}$
(reduced) so that $\md_{w}(u, v)=1$ and 
\[
\side_{w}(u, v)
=
\el(u, v)-\md_{w}(u, v)=1-1=0.
\]
Thus, $\side_{w}(u)=\side_{w}(v)$.
\end{itemize}
\end{proof}


\begin{ob}
Suppose $w$ is separated.
Then, $(C(w), \le, \md_{w})$ is graded.
In particular, if $C\le D$ in $C(w)$, then 
\[
\self(C)\le \self(D).\]
%
%
%
\end{ob}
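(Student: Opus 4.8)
The plan is to transport the grading of $B_{\down}(w)$ across to $C(w)$ through the projection $\pd$, which Proposition \ref{rep} already guarantees is a poset isomorphism onto $B_{\down}(w)$. The whole point is that, \emph{under the separation hypothesis}, the subposet $B_{\down}(w)$ becomes an honest lower interval, so its grading by $\el$ can simply be pulled back.

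First I would record the structural consequence of separation established just above: since $w$ is separated, $B_{\down}(w) = {}^{\dlw}[e, \pd(w)]^{\drw} = [e, \pd(w)]$ is a genuine lower interval, and is therefore graded by the length function $\el$, just as $(B(w), \le, \el)$ is. This is the crucial point. For a non-separated $w$ the set $B_{\down}(w)$ is only a subposet of $B(w)$, and as the Remark warns, a two-sided quotient need not be graded by $\el$ at all; separation is exactly what excludes this pathology and turns $B_{\down}(w)$ into a true interval $[e, \pd(w)]$.

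Next I would identify the rank function. Writing $u = \pd(C) = \min C$ for $C \in C(w)$, we have $\pd(u) = u$ because $u$ is already the minimal-length representative of its own coset, so $\md_{w}(C) = \md_{w}(\pd(C)) = \el(\pd(u)) = \el(u) = \el(\pd(C))$. Thus $\md_{w}$ on $C(w)$ is precisely the pullback of $\el$ on $B_{\down}(w)$ along the isomorphism $\pd$. Since a poset isomorphism that carries one rank function to another preserves gradedness, and $(B_{\down}(w), \le, \el)$ is graded by the previous paragraph, it follows that $(C(w), \le, \md_{w})$ is graded, as claimed.

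Finally, for the ``in particular'' clause I would not invoke the grading but the preceding Proposition, which states that for separated $w$ the side length is weakly increasing on $B(w)$. Given $C \le D$ in $C(w)$, the definition of the quotient order yields $P^{\up}(C) \le P^{\up}(D)$ in $B(w)$; applying that Proposition to this pair gives $\self(P^{\up}(C)) \le \self(P^{\up}(D))$, which is exactly $\self(C) \le \self(D)$ by the definition of side length on cosets. The only steps demanding care are the rank-function bookkeeping in the main step—verifying $\pd(\min C) = \min C$ so that $\md_{w}$ genuinely matches $\el$ under the isomorphism—and the appeal to separation guaranteeing that $B_{\down}(w)$ is a true interval rather than merely a subposet; once these are in place the argument is immediate.
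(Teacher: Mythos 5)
Your proposal is correct and coincides with the paper's intended justification: the paper states this as an Observation with no written proof, precisely because it is meant to follow from the two propositions you invoke — that separation makes $B_{\down}(w)=[e,\pd(w)]$ a lower interval graded by $\el$, and that $\side_{w}$ is weakly increasing on $B(w)$ — assembled via the isomorphism $P_{\down}\colon (C(w),\le)\to (B_{\down}(w),\le)$ under which $\md_{w}$ is the pullback of $\el$. Your bookkeeping ($\pd(\min C)=\min C$, hence $\md_{w}(C)=\el(P_{\down}(C))$, and $\side_{w}(C)=\side_{w}(P^{\up}(C))$ with $C\le D\Rightarrow P^{\up}(C)\le P^{\up}(D)$) is exactly the missing glue, so nothing further is needed.
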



\subsection{quotient Bruhat graph}

Now we are ready for introducing quotient Bruhat graphs.
\begin{defn}
The \eh{quotient Bruhat graph} of $w$ is the directed graph with the vertex set $C(w)$ and edges $C\to D$ if 
$C\ne D$ and there exists 
a directed edge $u\to v$ such that 
$u\in C$ and $v\in D$.
\end{defn}
In this way, 
we can regard $C(w)$ not only a poset but also a directed graph.



%
%
%


If $P_{\down}(C)\to P_{\down}(D)$, then $C\to D$ by definition. 
If $w$ is separated, the converse also holds:
\begin{prop}
If $w$ is separated and $C, D \in C(w)$, $C\to D$, then $P_{\down}(C)\to P_{\down}(D)$.
\end{prop}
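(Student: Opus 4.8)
The plan is to prove the nontrivial (forward) direction, since the converse $\pd(C)\to\pd(D)\Rightarrow C\to D$ was already observed just above the statement (take $u=\pd(C)$, $v=\pd(D)$). So suppose $C\to D$ with $C\ne D$, and fix a witnessing Bruhat edge $u\to v$ with $u\in C$, $v\in D$; thus $v=tu$ for some $t\in T$ and $\el(u)<\el(v)$. Writing $u_{0}=\pd(C)=\pd(u)$ and $v_{0}=\pd(D)=\pd(v)$, the goal is to produce a single reflection $t'\in T$ with $v_{0}=t'u_{0}$ and $\el(u_{0})<\el(v_{0})$.

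First I would reduce the edge to a one-letter deletion. Since $\el(tv)=\el(u)<\el(v)$, the Strong Exchange Property applied to a reduced word for $v$ shows that $u=tv$ is that word with exactly one letter removed. Using separatedness I choose the word compatibly with the double-coset factorization: $v=a_{1}\cdots a_{l}\,b_{1}\cdots b_{m}\,c_{1}\cdots c_{n}$ with $a_{i}\in\dlw$, $c_{i}\in\drw$, $b_{1}\cdots b_{m}=v_{0}$ reduced, and each $b_{i}\le\pd(w)$, so $\supp v_{0}$ is disjoint from $\dlw\cup\drw$. I then argue the deleted letter must be a middle letter $b_{j}$: deleting an $a_{i}$ or a $c_{i}$ leaves the middle factor $v_{0}$ unchanged, whence $u\in W_{\dlw}v_{0}W_{\drw}=C_{w}(v)=D$, contradicting $C\ne D$.

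Next I identify $\pd(u)$. Let $m^{\ast}$ be the reduced form of the (possibly nonreduced) word $b_{1}\cdots\widehat{b_{j}}\cdots b_{m}$. As a subword of a reduced word for $v_{0}$ it gives $m^{\ast}\le v_{0}\le w$; and its support lies in $\{b_{1},\dots,b_{m}\}\subseteq S(\pd(w))$, hence avoids $\dlw\cup\drw$, so $m^{\ast}\in{}^{\dlw}W^{\drw}$ and therefore $m^{\ast}\in B_{\down}(w)$. Since $u=(a_{1}\cdots a_{l})\,m^{\ast}\,(c_{1}\cdots c_{n})\in W_{\dlw}m^{\ast}W_{\drw}$ and every element of $B_{\down}(w)$ is the minimal-length representative of its Bruhat coset, I conclude $u_{0}=\pd(u)=m^{\ast}$. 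Finally, comparing group elements yields
\[
v_{0}=b_{1}\cdots b_{m}=\big(b_{1}\cdots b_{j-1}\big)b_{j}\big(b_{1}\cdots b_{j-1}\big)^{-1}m^{\ast}=t'u_{0},
\]
with $t'=(b_{1}\cdots b_{j-1})b_{j}(b_{1}\cdots b_{j-1})^{-1}\in T$; since $\el(m^{\ast})\le m-1<m=\el(v_{0})$ we get $\el(u_{0})<\el(v_{0})$, so $u_{0}\to v_{0}$.

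The step I expect to be the main obstacle is that a Bruhat-graph edge $u\to v$ need not be a covering relation, unlike the covering case handled in the preceding separated propositions: deleting $b_{j}$ from the reduced word of $v_{0}$ may produce a nonreduced word, so $\el(u,v)$ can exceed $1$. The point that rescues the argument is that the single-reflection relation $v_{0}=t'\pd(u)$ survives passage to the reduced form $m^{\ast}$, and that $m^{\ast}$ is correctly recognized as the double-coset minimum $\pd(u)$ precisely because, under separatedness, its support avoids $\dlw\cup\drw$. Verifying these two facts — that $m^{\ast}=\pd(u)$ and that the conjugating reflection is unaffected by reduction — is where the separated hypothesis does its real work.
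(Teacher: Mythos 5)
Your proposal is correct and takes essentially the same route as the paper, which proves the statement via a lemma asserting that $u\to v$ with $u\not\sim_{w}v$ forces $P_{\down}(u)\to P_{\down}(v)$: the same coset-compatible reduced word for $v$, the same elimination of the cases where an $a_{i}$ or $c_{i}$ is deleted, and the same use of separatedness (together with $P_{\down}(v)\le P_{\down}(w)$ and the Subword Property) to recognize the middle subword as $P_{\down}(u)$. The only differences are cosmetic: you inline the lemma and verify the final single-reflection relation $v_{0}=t'u_{0}$ by direct computation, where the paper instead cites its exchange fact (Fact \ref{esub}).
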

This follows from the following lemma.


\begin{lem}
Suppose $w$ is separated. If $u\to v$ in $B(w)$ and 
$u\not\simw v$, then $P_{\down}(u)\to P_{\down}(v)$.
\end{lem}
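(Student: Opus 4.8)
The plan is to transfer the single reflection relating $u$ and $v$ down to a single reflection relating their minimal representatives, exploiting the block structure of reduced words available when $w$ is separated. Set $I=\dlw$, $J=\drw$, $u_{0}=P_{\down}(u)$ and $v_{0}=P_{\down}(v)$. First I would record the easy half: since $u\to v$ gives $u\le v$, Proposition \ref{rep} yields $u_{0}\le v_{0}$, while $u\not\simw v$ forces $C_{w}(u)\ne C_{w}(v)$ and hence $u_{0}\ne v_{0}$. Thus $u_{0}<v_{0}$, and in particular $\el(u_{0})<\el(v_{0})$. It therefore remains only to produce a reflection $t'$ with $v_{0}=t'u_{0}$, since this is exactly what upgrades $u_{0}<v_{0}$ to an edge $u_{0}\to v_{0}$.

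Next I would fix the block-decomposed reduced expression $v=a_{1}\cd a_{l}\,b_{1}\cd b_{m}\,c_{1}\cd c_{n}$ with $a_{i}\in I$, $c_{k}\in J$ and $b_{1}\cd b_{m}=v_{0}$ reduced (the same decomposition used in the previous proof). Since $v=tu$ with $\el(u)<\el(v)$, we have $\el(tv)<\el(v)$, so the Strong Exchange Condition writes $u=tv$ as this reduced word with exactly one letter deleted. I then split into cases by which block the deleted letter lies in. If it is some $a_{i}$ or some $c_{k}$, the middle block is untouched, so $u\in W_{I}v_{0}W_{J}=C_{w}(v)$, forcing $u\simw v$ and contradicting the hypothesis.

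Hence the deletion is in the middle block: $u=a_{1}\cd a_{l}\,u'\,c_{1}\cd c_{n}$ with $u'=b_{1}\cd\wh{b_{j}}\cd b_{m}$, so $u\in W_{I}u'W_{J}$. Because $u'\le v_{0}$ by the Subword Property and $v_{0}\le P_{\down}(w)$ by Proposition \ref{rep}, separatedness gives $u'\in[e,P_{\down}(w)]\sub {}^{I}W^{J}$; thus $u'$ is the unique minimal representative of its own double coset, i.e.\ $u'=u_{0}$. Taking $t'=(b_{1}\cd b_{j-1})\,b_{j}\,(b_{1}\cd b_{j-1})^{-1}$, a reflection, a direct cancellation gives $t'u_{0}=t'u'=b_{1}\cd b_{m}=v_{0}$. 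Combined with $\el(u_{0})<\el(v_{0})$ this is precisely $u_{0}\to v_{0}$, as required.

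The main obstacle is the third step. Since a Bruhat edge $u\to v$ may have $\el(v)-\el(u)>1$, the one-letter deletion furnished by Strong Exchange need not be a reduced word, so one cannot simply argue that $u'$ is the reduced middle block of $u$. The point that makes the argument go through is that the Subword Property nonetheless places the deleted-block element $u'$ inside $[e,P_{\down}(w)]$, where separatedness identifies it as a minimal double-coset representative and hence as $P_{\down}(u)$, after which the conjugate reflection $t'$ supplies the edge.
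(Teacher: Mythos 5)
Your proposal is correct and takes essentially the same approach as the paper's proof: a block-decomposed reduced word for $v$, a one-letter deletion supplied by Strong Exchange (the paper's Fact \ref{esub}), a contradiction with $u\not\sim_{w}v$ when the deletion falls in the $W_{D_{L}(w)}$- or $W_{D_{R}(w)}$-block, and the Subword Property plus separatedness to identify the deleted middle block as the minimal coset representative $P_{\down}(u)$. The only difference is cosmetic: at the end you construct the conjugating reflection $t'$ by hand and import the length inequality from Proposition \ref{rep}, whereas the paper simply invokes Fact \ref{esub} once more.
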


\begin{proof}
Write 
\[
v=a_{1}\cd a_{l}b_{1}\cd b_{m}c_{1}\cd c_{n}
\tn{\q  (reduced)}
\]
with $a_{i}\in \dlw, b_{i}\in S, c_{i}\in \drw$
and 
\[
\pd(v)=b_{1}\cd b_{m}
\,\,\te{ (reduced).}
\]

Since $u\to v$, we can obtain a word for $u$ by deleting one simple reflection from the reduced word for $v$ above; there are three possible cases:
\[
u=a_{1}\cd \wh{a_{j}}\cd a_{l}b_{1}\cd b_{m}c_{1}\cd c_{n},
\]
\[
u=a_{1}\cd  a_{l}b_{1}\cd \wh{b_{j}}\cd b_{m}c_{1}\cd c_{n},
\]
\[
u=a_{1}\cd a_{l}b_{1}\cd b_{m}c_{1}\cd \wh{c_{j}}\cd c_{n}.
\]
In the first and third cases, we must have 
$u\sim_{w}v$,
a contradiction. Thus, it is necessary that 
\[
u=a_{1}\cd  a_{l}b_{1}\cd \wh{b_{j}}\cd b_{m}c_{1}\cd c_{n}
\]
(which may or may not be reduced).
Then we have 
\[
b_{1}\cd \wh{b_{j}}\cd b_{m}<\pd(v)\le \pd(w)
\]
where the first inequality is due to Subword Property.
Since $w$ is separated, this implies
\[
b_{1}\cd \wh{b_{j}}\cd b_{m}\in 
[e, \pd(w)]=B_{\down}(w),
\]
and moreover
\[
\pd(u)=b_{1}\cd \wh{b_{j}}\cd b_{m}.\]
With Fact \ref{esub}, conclude that $\pd(u)\to \pd(v)$.
\end{proof}

Notice that, in general, 
$u\to v, u\not\simw v$
does not necessarily imply $P^{\up}(u)\to P^{\up}(v)$ 
(and $C\to D$ does not necessarily imply 
$P^\up(C)\to P^{\up}(D)$).


\subsection{main theorem 2}

Now, we again summarize the our results above as a Theorem with remarks.

\begin{thm}
\label{mth2}
Suppose $w$ is separated. Then,
every lower interval $B(w)$ is
 a disjoint union of two-sided cosets
 paramtrized by its quotient lower interval
\[
B(w)=\bigcup_{C\in C(w)} C
\]
with all of the following:
\begin{itemize}
	\item $B_{\down}(w)$ are isomorphic to $B^{\up}(w)$ as posets.
\item $(B_{\down}(w), \le)$ is a faithful subposet of $(B(w),\le, \el)$.
\item We have
\[
B(w)=\bigcup_{u\in B_{\down}(w)} C_{w}(u)
=\bigcup_{v\in B^{\up}(w)} C_{w}(v).
\]
\item 
There is a natural partition of $\el$, the rank function of $(B(w), \le)$, by two functions both of which take nonnegative integers:
\[
\el(u)=\md_{w}(u)+\side_{w}(u), \q u\in B(w),
\]
\[
\md_{w}(u)\ge0, \q \side_{w}(u)\ge 0.
\]
Moreover, both of $\md_{w}, \side_{w}$ are weakly increasing.
\item Each Bruhat coset $(C, \le)$ of $w$ is a subinterval of $(B(w), \le)$ and hence $(C, \le, \side_{w})$ is an almost faithful subposet of 
\rank\, $\side_{w}(C)$
and, as a directed graph, it is $\side_{w}(C)$-regular.
Moreover, the degree of such cosets is weakly increasing 
in quotient Bruhat order:
\[
C\le D \then \deg_{}(C)\le \deg_{}(D).
\]
\item 
There is an equivalence 
\[
C\to D \iff P_{\down}(C)\iff P_{\down}(D)
\]
so that $(C(w), \to)\cong (B_{\down}(w), \to)$ as a directed graph.
\end{itemize}
\end{thm}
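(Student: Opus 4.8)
The plan is to read Theorem~\ref{mth2} as the specialization of Theorem~\ref{mth1} to a separated $w$, and then to supply the extra monotonicity and graph-theoretic statements that the separation hypothesis makes available. Concretely, the disjoint-union decomposition $B(w)=\bigcup_{C\in C(w)}C$, the parametrization by $(C(w),\le)$, the poset isomorphism $B_{\down}(w)\cong B^{\up}(w)$ from Proposition~\ref{rep}, the fact that each Bruhat coset $C$ is a subinterval of $(B(w),\le)$ hence an almost faithful subposet of rank $\side_w(C)$ and $\side_w(C)$-regular as a graph, and the length splitting $\el(u)=\md_w(u)+\side_w(u)$ with $\md_w$ weakly increasing, all carry over verbatim from Theorem~\ref{mth1}, since none of them needs any hypothesis on $w$. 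I would state these first and cite Theorem~\ref{mth1} together with Proposition~\ref{rep}.

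Next I would collect the statements that genuinely use separation. For the faithfulness of $(B_{\down}(w),\le,\el)$ I would invoke the earlier identity $B_{\down}(w)={}^{\dlw}[e,\pd(w)]^{\drw}=[e,\pd(w)]$, valid for separated $w$: being an honest lower subinterval of $B(w)$, it is graded by $\el$, which is exactly faithfulness. For the strengthened statement that $\side_w$ (not merely $\md_w$) is also weakly increasing, I would cite the preceding Proposition proving $u\le v\Rightarrow\side_w(u)\le\side_w(v)$ for separated $w$; combined with the general monotonicity of $\md_w$ this gives both functions weakly increasing. The degree monotonicity $C\le D\Rightarrow\deg(C)\le\deg(D)$ is then immediate, since $\deg(C)=\side_w(C)$ and $\side_w$ is order-preserving on representatives; this is precisely the content of the Observation that $(C(w),\le,\md_w)$ is graded with $\side_w$ monotone.

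The last bullet, the graph isomorphism $(C(w),\to)\cong(B_{\down}(w),\to)$, is where the actual work lies. First I would note that $P_{\down}\colon C(w)\to B_{\down}(w)$ is a bijection: it is surjective because every $u\in B_{\down}(w)$ is the minimal representative of $C_w(u)$, and injective because distinct cosets have distinct minima. It then remains to show this bijection preserves adjacency, i.e.\ $C\to D\iff P_{\down}(C)\to P_{\down}(D)$. One direction is built into the definition of the quotient Bruhat graph: if $P_{\down}(C)\to P_{\down}(D)$, then taking $u=P_{\down}(C)\in C$ and $v=P_{\down}(D)\in D$ the edge $u\to v$ witnesses $C\to D$, with $C\ne D$ since their minima differ. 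For the converse I would use the Lemma: if $C\to D$ there is an edge $u\to v$ with $u\in C$, $v\in D$, and since $C\ne D$ we have $u\not\simw v$; the separated Lemma then yields $P_{\down}(u)\to P_{\down}(v)$, that is $P_{\down}(C)\to P_{\down}(D)$.

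The main obstacle is precisely this last equivalence, and within it the converse direction that no ``spurious'' quotient edges arise: an edge $C\to D$ might \emph{a priori} be witnessed only by some non-minimal pair $u\to v$ whose projections fail to be adjacent. The separation hypothesis is exactly what rules this out, through the Lemma, which guarantees that any lifting edge between distinct cosets descends to an edge between minimal representatives, so that adjacency in the quotient graph is faithfully recorded on $B_{\down}(w)=[e,\pd(w)]$. I expect no difficulty in the bijection or in the easy direction; the care is in checking that the Lemma applies to an arbitrary witnessing edge, which is ensured by the implication $C\ne D\Rightarrow u\not\simw v$.
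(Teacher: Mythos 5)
Your proposal is correct and takes essentially the same approach as the paper: Theorem \ref{mth2} is presented there explicitly as a summary of results already proved, and you assemble exactly those ingredients (Theorem \ref{mth1} and Proposition \ref{rep} for the parts needing no hypothesis on $w$; the separated-case propositions for faithfulness of $(B_{\downarrow}(w),\le,\ell)$ and weak monotonicity of $\mathrm{side}_w$; the Observation for degree monotonicity; and the Lemma for the quotient-graph equivalence). Your handling of the one delicate point, the converse direction of $C\to D\iff P_{\downarrow}(C)\to P_{\downarrow}(D)$, via the implication $C\ne D\Rightarrow u\not\sim_w v$ so that the Lemma applies to an arbitrary witnessing edge, is precisely the paper's own argument.
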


\begin{rmk}
We should not misunderstand this theorem as 
the \eh{direct product of graded posets}.
Even though $(B(w), \le, \el)$ has two kinds of graded poset structures 
$(B_{\down}(w), \le, \md_{w})$
and $(C, \le, \side_{w})$ inside with 
$\el(u)=\md_{w}(u)+\side_{w}(u)$, $B(w)$ is \eh{not} necessarily the direct product of them.
\end{rmk}

\subsection{example 1: 45312}
Here we observe some examples from type A Coxeter groups.

%
%
%
%
%
%
%
Let $w=45312=s_{2}s_{3}s_{2}|s_{1}s_{4}|s_{2}s_{3}s_{2}$  (separated).
We have 
\begin{align*}
	B^{\up}(w)&=\{45312, 43215, 15432, 14325\},
	\\B_{\down}(w)&=\{21354, 21345, 12354, 12345\}.
\end{align*}
\begin{center}
On one hand, $B_{\down}(w)= $
\begin{minipage}[c]{.4\tw}
\begin{xy}
0;<1mm,0mm>:
,(20,10)*++{\ci}*+++!R{1}
,(30,0)*++{\ci}*+++!U{0}
,(30,20)*++{\ci}*+++!D{2}
,(40,10)*++{\ci}*+++!L{1}
\ar@{-}(20,10);(30,0);
\ar@{-}(20,10);(30,20);
\ar@{-}(30,0);(40,10);
\ar@{-}(30,20);(40,10);
\end{xy}
\end{minipage} is a faithful subposet of $B(w)$;
Those numbers indicate length in $B(w)$.

On the other hand,
$B^{\up}(w)= $
\begin{minipage}[c]{.4\tw}
\begin{xy}
0;<1mm,0mm>:
,(20,10)*++{\ci}*+++!R{\fn 6}
,(30,0)*++{\ci}*+++!U{\fn 3}
,(30,20)*++{\ci}*+++!D{\fn 8}
,(40,10)*++{\ci}*+++!L{\fn 6}
\ar@{-}(20,10);(30,0);
\ar@{-}(20,10);(30,20);
\ar@{-}(30,0);(40,10);
\ar@{-}(30,20);(40,10);
\end{xy}

\end{minipage}
is \eh{not} an almost faithful subposet of $(B(w), \le, \el)$.
\end{center}
The quotient Bruhat graph $C(w)$ consists of four cosets; degree of those is 3, 5, 5 and 6, respectively.
\begin{center}
$C(w)\cong [e, s_{1}s_{4}]=$
\begin{minipage}[c]{.4\tw}
\begin{xy}
0;<5mm,0mm>:
,0+(-12,1)*{\ci}="b1"*+++!U{}
,(-3,3)+(-12,1)*{\ci}="al1"*++!R{}
,(3,3)+(-12,1)*{\ci}="ar1"*++!L{}
,(0,6)+(-12,1)*{\ci}="c"*++!R{}
,\ar@{->}"b1";"al1"
,\ar@{->}"b1";"ar1"
,\ar@{->}"al1";"c"
,\ar@{->}"ar1";"c"
\end{xy}
\end{minipage}.

%

{\renewcommand{\arraystretch}{1.5}
\begin{table}[h!]
\caption{$C(w)$ for $w=45312$}
\begin{center}
	\begin{tabular}{lcccccccccc}\h
	$P^\up{(C)}$&$P_\down{(C)}$	&$\el(C)$&$\md_{w}(C)$&$\self(C)$	\\\h
	$45312=s_{2}s_{3}s_{2}|s_{1}s_{4}|s_{2}s_{3}s_{2}$&$s_{1}s_{4}$	&8&2&6
	\\\h
	$43215=s_{2}s_{3}s_{2}|s_{1}|s_{2}s_{3}$&$s_{1}$		
		&6&1&5
	\\\h
	$15432=s_{2}s_{3}s_{2}|s_{4}|s_{3}s_{2}$&$s_{4}$		
		&6	&1&5
	\\\h
	$14325=s_{2}s_{3}s_{2}|e|e$&$e$
				&3&0&3
	\\\h
\end{tabular}
\end{center}
\end{table}%
}

\end{center}

\subsection{example 2: 52341}

Let $w=52341=s_{1}s_{4}|s_{2}s_{3}s_{2}|s_{1}s_{4}$ (Table \ref{tab3}).
\begin{center}
$B_{\down}(w)=$
\begin{minipage}[c]{.4\tw}
\begin{xy}
0;<5mm,0mm>:
,0+(-12,0)*{\ci}="b1"*+++!U{0}
,(-3,2)+(-12,0)*{\ci}="al1"*+++!R{1}
,(3,2)+(-12,0)*{\ci}="ar1"*+++!L{1}
,(-3,5)+(-12,0)*{\ci}="cl1"*+++!R{2}
,(3,5)+(-12,0)*{\ci}="cr1"*+++!L{2}
,(0,7)+(-12,0)*{\ci}="t1"*+++!D{3}
,\ar@{-}"b1";"al1"
,\ar@{-}"b1";"ar1"
,\ar@{-}"al1";"cl1"
,\ar@{-}"ar1";"cr1"
,\ar@{-}"cl1";"t1"
,\ar@{-}"cr1";"t1"
,\ar@{-}"al1";"cr1"
,\ar@{-}"ar1";"cl1"
\end{xy}
\end{minipage}
\end{center}
\begin{center}
$B^{\up}(w)=$
\begin{minipage}[c]{.4\tw}
\begin{xy}
0;<5mm,0mm>:
,0+(-12,0)*{\ci}="b1"*+++!U{2}
,(-3,2)+(-12,0)*{\ci}="al1"*+++!R{4}
,(3,2)+(-12,0)*{\ci}="ar1"*+++!L{4}
,(-3,5)+(-12,0)*{\ci}="cl1"*+++!R{6}
,(3,5)+(-12,0)*{\ci}="cr1"*+++!L{6}
,(0,7)+(-12,0)*{\ci}="t1"*+++!D{7}
,\ar@{-}"b1";"al1"
,\ar@{-}"b1";"ar1"
,\ar@{-}"al1";"cl1"
,\ar@{-}"ar1";"cr1"
,\ar@{-}"cl1";"t1"
,\ar@{-}"cr1";"t1"
,\ar@{-}"al1";"cr1"
,\ar@{-}"ar1";"cl1"
\end{xy}
\end{minipage}\end{center}

\begin{center}
$C(w)\cong [e, s_{2}s_{3}s_{2}]$=
\begin{minipage}[c]{.4\tw}
\begin{xy}
0;<5mm,0mm>:
,0+(-12,0)*{\ci}="b1"*+++!U{}
,(-3,2)+(-12,0)*{\ci}="al1"*++!R{}
,(3,2)+(-12,0)*{\ci}="ar1"*++!L{}
,(-3,5)+(-12,0)*{\ci}="cl1"*++!R{}
,(3,5)+(-12,0)*{\ci}="cr1"*++!L{}
,(0,7)+(-12,0)*{\ci}="t1"*+++!D{}
,\ar@{->}"b1";"al1"
,\ar@{->}"b1";"ar1"
,\ar@{->}"al1";"cl1"
,\ar@{->}"ar1";"cr1"
,\ar@{->}"cl1";"t1"
,\ar@{->}"cr1";"t1"
,\ar@{->}"al1";"cr1"
,\ar@{->}"ar1";"cl1"
,\ar@{->}"b1";"t1"
\end{xy}
\end{minipage}
\end{center}

%


{\renewcommand{\arraystretch}{1.5}
\begin{table}[htp]
\caption{$C(w)$ for $w=52341$}
\label{tab3}
\begin{center}
	\begin{tabular}{lccccccccccc}\h
	$P^{\up}(C)$&$P_{\down}(C)$	&$\el(C)$&$\md_{w}(C)$&$\self(C)$\\\h
	$52341=s_{1}s_{4}|s_{2}s_{3}s_{2}|s_{1}s_{4}$&$s_{2}s_{3}s_{2}$		&7&
	3&4
	\\\h
	$52143=s_{1}s_{4}|s_{3}s_{2}|s_{1}s_{4}$&$s_{3}s_{2}$		&6&
	2&4
	\\\h
	$32541=s_{1}s_{4}|s_{2}s_{3}|s_{1}s_{4}$&$s_{2}s_{3}$		&6&
	2&4
	\\\h
	$21543=s_{1}s_{4}|s_{3}|s_{4}$&$s_{3}$		&4
		&1&3
	\\\h
	$32154=s_{1}|s_{2}|s_{1}s_{4}$&$s_{2}$		&4
		&1&3	
	\\\h
	$21354=s_{1}|e|s_{4}$&$e$		&2
			&0&2
	\\\h
\end{tabular}
\end{center}
\end{table}%
}



%


%


\section{Future research}
We end with recording some ideas for our future research.

\begin{enumerate}
	\item 
	Let $\deg_{w}(u)$ be the degree of $u$ in $B(w)$ 
	(graph-theoretic degree = the number of edges incident to $u$).
	We can show that if $u\simw v$, then 
	$\deg_{w}(u)=\deg_{w}(v)$.
	Moreover, we always have $\deg_{w}(u)\ge \elw$ (known as 
	\eh{Deodhar inequality} \cite{billey}).
	Can we prove
	\[
u\le v \mb{ in $B(w)$ } \then
	\deg_{w}(u)\ge \deg_{w}(v) \tn{\mb{\ph{a}}}?\]
	\item What if $w$ is not separated? 
	What more can you say about (poset, graded poset, directed graph) structures of 
	$B^{\up}(w), B_{\down}(w), C(w)$ in $(B(w), \le, \el)$?
	\item Study the \eh{Poincare polynomial} of $B(w)$
	\[
\P_{w}(q)=\sum_{u\in B(w)}q^{\elu}\]
	from an aspect of quotient lower intervals:
	for example, find a relation between $\P_{w}(q)$ and $\P_{\pd(w)}(q)$.
\end{enumerate}

\appendix

\section{Basics on Coxeter groups}

\subsection{reduced words, weak orders, subword property}\label{i3}
Let $W=(W, S)$ be a Coxeter system.
By $T$, we mean the set of its reflections:
\[
T=\Set{w^{-1}sw}{w\inw, s\ins}.
\]

Since $S$ is a group-theoretic generator of $W$, 
for each $w\inw$, we have 
\[
w=s_{1}\cd s_{n}
\]
for some $s_{1}, \ds, s_{n}\in S$.
\begin{defn}
The \eh{length} of $w$ is 
\[
\el(w)=\min\{l\ge 0\mid 
w=s_{1}\cd s_{l}\}.
\]
\end{defn}
We say the word $w=s_{1}\cd s_{l}$ is \eh{reduced} if 
$l=\elw$.
\begin{defn}
\[D_{L}(w)=\{s\in S\mid \el(sw)<\elw\},\]
\[D_{R}(w)=\{s\in S\mid \el(ws)<\elw\}.\]
\end{defn}
\begin{fact}
\cite[Lifting Property, Proposition 2.2.7]{bb}
Let $u<w$. If $s\in D_L(w)\setminus D_L(u)$, then $su\le w.$
If $s\in D_R(w)\setminus D_R(u)$, then $us\le w.$ 
\end{fact}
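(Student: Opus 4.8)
The plan is to derive the left-hand assertion from the \emph{Subword Property} -- the characterization of Bruhat order by reduced subwords used repeatedly above -- and then to obtain the right-hand assertion by symmetry. I would prove only the implication ``$s\in D_L(w)\setminus D_L(u)\Rightarrow su\le w$''. For the right-hand statement, I would invoke that inversion $x\mapsto x^{-1}$ is a length-preserving automorphism of $(W,\le)$ with $D_R(x)=D_L(x^{-1})$ and $s^{-1}=s$; applying the left version to the pair $(u^{-1},w^{-1})$ gives $su^{-1}\le w^{-1}$, which is equivalent to $us\le w$.

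First I would fix a reduced word adapted to $s$. Since $s\in D_L(w)$, we have $\ell(sw)<\ell(w)$, so there is a reduced expression $w=s\,s_2\cdots s_q$ in which $sw=s_2\cdots s_q$ is reduced. The hypothesis $u<w$ together with the Subword Property then guarantees that some subword of the fixed word $s\,s_2\cdots s_q$ spells a reduced word for $u$.

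The key step is to locate this subword inside $s_2\cdots s_q$, i.e.\ to show it avoids the leading letter $s$. Indeed, if a reduced subword for $u$ used position $1$, then $u$ would admit a reduced word beginning with $s$, forcing $\ell(su)<\ell(u)$, that is $s\in D_L(u)$ -- contradicting $s\notin D_L(u)$. Hence the chosen reduced word for $u$ is a subword of $s_2\cdots s_q=sw$, which in passing already yields $u\le sw$. Now I prepend the letter $s$: since $s\notin D_L(u)$ we have $\ell(su)=\ell(u)+1$, so the word consisting of $s$ followed by this reduced word for $u$ is itself reduced, and it is manifestly a subword of $s\,s_2\cdots s_q=w$. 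By the Subword Property, $su\le w$, as required.

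I expect the only genuine obstacle to be the middle step -- certifying that the reduced subword representing $u$ omits the initial $s$. This relies on the ``some/every'' flexibility of the Subword Property (a reduced subword may be read off the particular reduced word $s\,s_2\cdots s_q$) combined with the length bookkeeping $\ell(su)=\ell(u)+1$ forced by $s\notin D_L(u)$. Once both are in hand, the two conclusions $u\le sw$ and $su\le w$ fall out of the same subword picture, and the right-hand statement follows from the inverse symmetry noted at the outset.
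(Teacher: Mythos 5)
Your argument is correct, but there is nothing in the paper to compare it against: the paper does not prove this statement at all, it imports it wholesale as a Fact from \cite[Proposition 2.2.7]{bb}. Your proposal therefore supplies a proof where the paper has none, and the route you chose is essentially the standard textbook derivation: fix a reduced word $w=s\,s_2\cdots s_q$ beginning with $s$, use $s\notin D_L(u)$ to force the reduced subword for $u$ off the first letter (giving $u\le sw$ in passing), prepend $s$ to obtain a reduced subword spelling $su$, and transfer the right-sided statement through the inversion symmetry $x\mapsto x^{-1}$, which preserves Bruhat order and length and satisfies $D_R(x)=D_L(x^{-1})$. One caveat deserves explicit mention: your key step --- ``a reduced subword for $u$ passing through position $1$ would force $s\in D_L(u)$'' --- genuinely requires the \emph{reduced}-subword form of the Subword Property, whereas the version stated in this paper's Appendix only guarantees a subword for $u$ that ``need not be reduced''. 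With a possibly non-reduced subword the step fails: for instance $e$ is spelled by the non-reduced subword $s_1s_1$ of the reduced word $s_1s_2s_1$, using position $1$, even though $s_1\notin D_L(e)$. The repair is routine --- by the deletion property any word spelling $u$ contains a reduced subword spelling $u$, and indeed \cite{bb} states the Subword Property in the reduced form --- but since you are working from this paper's statement of the fact, you should say explicitly that you are invoking that stronger (equivalent) version.
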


\begin{defn}
Write $u\to v$ if $v=tu$ for some $t\in T$ and $\elu<\elv$.
\end{defn}

\begin{defn}
We say that $t\in T$ is a \eh{left inversion} for $w$ if 
$tw\to w$. Similarly, 
$t$ is a \eh{right inversion} for $w$ if $wt\to w$.
Define
\begin{align*}
T_{L}(w)=\{t\in T\mid tw\to w\} \text{ \q and \q}
T_{R}(w)=\{t\in T\mid wt\to w\}.
\end{align*}
\end{defn}

\eh{Left, right weak orders} on $W$ are simply 
defined as follows:
\[
u\le_{L}w \iff T_{R}(u)\sub T_{R}(w),
\]
\[
u\le_{R}w \iff T_{L}(u)\sub T_{L}(w).
\]

From each reduced word for $w$, we can construct 
its inversions as follows:
Let $w=s_1s_2\cdots s_l$ be any reduced word. 
Set $t_i=s_1s_2\cdots s_{i-1}s_is_{i-1} \cdots s_2s_1$ for each $i=1 ,2, \cdots, l$. Then:
\bft{We have $T_{L}(w)=\{t_1, t_2, \dots, t_l\}$ and $t_i$ are all distinct
 (independent from the choice of a reduced word for $w$). Consequently, $l=|T_{L}(w)|=\ell(w)$.
}\eft

\begin{fact}
\label{esub}
Let $w=s_{1}\cd s_{l}$ be a reduced word and $t\in T$.
Then, the following are equivalent:
	\begin{enumerate}
		\item $t\in T_{L}(w)$.
		\item $t= s_{1}s_{2}\cd s_{i-1}s_{i}s_{i-1}\cd s_{2}s_{1}$
		 for some $i$.
		\item $tw= s_{1}\cd \wh{s_{i}}\cd s_{l}$
		for some $i$.	\end{enumerate}
Moreover, the number $i$ the above is unique.
\end{fact}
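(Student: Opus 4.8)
The plan is to establish the equivalences by treating $(1)\Leftrightarrow(2)$ and the uniqueness of $i$ as immediate consequences of the preceding Fact, and to do the real work in proving $(2)\Leftrightarrow(3)$ through a direct cancellation computation.

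First I would reformulate (1). Since every reflection is an involution, the identity $w=t(tw)$ holds automatically, so the relation $tw\to w$ defining membership in $T_{L}(w)$ amounts exactly to $\el(tw)<\elw$. Under this reading, (1) says precisely that $t$ is one of the reflections $t_i=s_1\cdots s_{i-1}s_is_{i-1}\cdots s_1$, which is the content of the preceding Fact asserting $T_{L}(w)=\{t_1,\dots,t_l\}$. Thus $(1)\Leftrightarrow(2)$ requires nothing beyond citing that Fact.

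The heart of the argument is $(2)\Leftrightarrow(3)$, and I would prove it as a chain of purely group-theoretic identities. Assuming $t=t_i$, I multiply on the right by $w=(s_1\cdots s_{i-1})\,s_i\,(s_{i+1}\cdots s_l)$ and telescope: the descending block $s_{i-1}\cdots s_1$ of $t_i$ cancels the initial segment $s_1\cdots s_{i-1}$ of $w$, leaving $t_iw=(s_1\cdots s_i)(s_is_{i+1}\cdots s_l)$, and the central pair $s_is_i=e$ then collapses to give $t_iw=s_1\cdots\wh{s_i}\cdots s_l$, which is (3) with the same $i$. For the reverse, from (3) I compute $t=(tw)w^{-1}=(s_1\cdots\wh{s_i}\cdots s_l)(s_l\cdots s_1)$ and cancel the matching tails $(s_{i+1}\cdots s_l)(s_l\cdots s_{i+1})=e$, recovering $t=(s_1\cdots s_{i-1})s_i(s_{i-1}\cdots s_1)=t_i$. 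No appeal to reducedness is needed, so there is no subtlety about lengths; the argument is valid as soon as each $s_k^2=e$.

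Finally, uniqueness of $i$ drops out of the distinctness of $t_1,\dots,t_l$ guaranteed by the preceding Fact: if two indices satisfied (2), or (via the $(3)\Rightarrow(2)$ computation) if two indices satisfied (3), we would obtain $t_i=t_j$ with $i\ne j$, contradicting that distinctness. The only place demanding attention is the bookkeeping in the telescoping products, namely tracking exactly which letters survive the cascade of involutive cancellations, but this is routine rather than a genuine obstacle.
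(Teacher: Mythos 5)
Your proposal is correct. Note that the paper itself offers no proof of this statement: it is recorded as a ``Fact'' in the appendix, i.e.\ standard background imported from the literature (it is essentially the inversion/exchange characterization from \cite{bb}), so there is nothing in the paper to compare your argument against line by line. Your derivation is a sound and self-contained way to fill that gap: the equivalence $(1)\Leftrightarrow(2)$ and the uniqueness of $i$ are exactly the content of the preceding Fact ($T_{L}(w)=\{t_{1},\dots,t_{l}\}$ with the $t_{i}$ distinct), and your telescoping computations $t_{i}w=(s_{1}\cd s_{i-1})s_{i}s_{i}(s_{i+1}\cd s_{l})=s_{1}\cd \wh{s_{i}}\cd s_{l}$ and $t=(tw)w^{-1}=t_{i}$ correctly establish $(2)\Leftrightarrow(3)$ using only $s_{k}^{2}=e$. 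You are also right about where reducedness enters: it is needed only through the cited Fact (without it, neither $T_{L}(w)=\{t_{1},\dots,t_{l}\}$ nor the distinctness of the $t_{i}$ holds), while the cancellation argument itself is valid for arbitrary words. One small point worth making explicit: in the direction $(3)\Rightarrow(1)$ you implicitly use that each $t_{i}$ lies in $T_{L}(w)$, which again is supplied by the preceding Fact, so your chain $(3)\Rightarrow(2)\Rightarrow(1)$ is complete.
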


Notice that 
\[
tw= s_{1}\cd \wh{s_{i}}\cd s_{l}
\]
is a subword of the reduced word 
$s_{1}s_{2}\cd s_{l}$ for $w$.
This may or may not be reduced, though.

\bft[Subword Property]{
Let $w=s_1\cdots s_l$ be an arbitrary reduced word for $w$.
Then, the following are equivalent:
\be{\item $u\le w$ in Bruhat order.
\item There exists a subword of $s_1\cdots s_l$ for $u$; that is, 
\[
u=s_{i_1}\cdots s_{i_k}, \q 1\le i_{1}<\cd <i_{k}\le l.\]
(this word need not be reduced)
}\ee
}\eft

\subsection{one-sided cosets and quotients}

Here, we review some well-known facts. 

\bd{The \emph{standard parabolic subgroup} $W_I$ is the group-theoretic  subgroup of $W$ generated by $I$. 
}\ed

The subgroup $W_{I}$ is a Coxeter system itself with the canonical generators $I$, reflections $T(I)\defeq T\cap W_I$, Bruhat suborder $\le$ and length function $\ell_I$ with respect to $I$.

\bd{Let $V \sub W$.
Say $V$ is a \emph{left coset} if 
\[
V=W_Iu\overset{\tn{def}}{=}\{xu \mid x\in W_I\}\]
 for some $I\sub S$ and $u\in W$.
Similarly, it is a \eh{right coset} if 
\[
V=uW_I\overset{\tn{def}}{=}\{uy \mid y\in W_I\}\]
for some $I\sub S$ and $u\in W$.
It is a \eh{one-sided coset} if it is a left or right coset.
}\ed

\begin{rmk}
%
Note that the choice of index $I$ for each one-sided coset is unique:
\[
W_{I}u= W_{J}u\iff I=J,
\]
\[
uW_{I}= uW_{J}\iff I=J.
\]
\end{rmk}
\bft{
For any $W_Iu$, 
there exist $v_{0}, v_{1}\in W_Iu$,
its minimal and maximal length coset representatives, such that 
\[
\ell(v_0)\le \ell(v)\le \ell(v_1)
\]
for all $v\in W_{I}u$. Indeed, $v_{0}=\min W_{I}u$ and 
$v_{1}=\max W_{I}u$ (here min and max are taken in Bruhat order).
The same is true for right cosets.
}\eft

Next, let us review classical results on \eh{one-sided quotients}.
Define subsets of $W$ for $I, J\subs$ as 
\begin{align*}
\iW&=\{w\inw\mid s\in I\then \el(sw)>\elw\},\\	
W^{J}&=\{w\inw\mid s\in J\then \el(ws)>\elw\}.
\end{align*}
Call these \eh{left and right quotients}.

\begin{fact}\label{coset}
Let $J\sub S$. For each $w\in W$, 
there exists a \eh{unique} pair $
(w_{J}, w^{J})\in W_{J}\ti W^{J}$ such that 
\begin{align*}
	w&=w_{J}\cdot w^{J},
	\\\ell(w)&=\ell(w_{J})+\ell(w^{J}).
\end{align*}
\end{fact}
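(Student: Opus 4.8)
The plan is to separate existence from uniqueness, with additivity of lengths as the technical core. Throughout I use the right-handed form of Fact~\ref{esub}: if $v=r_{1}\cd r_{l}$ is reduced and $\el(vs)<\el(v)$ for some $s\in S$, then $vs=r_{1}\cd\wh{r_{i}}\cd r_{l}$ for a unique $i$, i.e.\ $vs$ is obtained by deleting one letter. For \emph{existence} I induct on $\el(w)$. If $\el(ws)>\el(w)$ for every $s\in J$ then $w\in W^{J}$ and $w=w\cdot e$ already works. Otherwise choose $s\in J$ with $\el(ws)<\el(w)$ and apply the inductive hypothesis to $ws$, obtaining $ws=x\cdot y$ with $x\in W^{J}$, $y\in W_{J}$, and $\el(ws)=\el(x)+\el(y)$. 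Then $w=x\cdot(ys)$ with $ys\in W_{J}$, and the count $\el(w)=\el(ws)+1=\el(x)+\el(y)+1$ combined with the subadditive bound $\el(x(ys))\le\el(x)+\el(ys)\le\el(x)+\el(y)+1$ forces $\el(w)=\el(x)+\el(ys)$. So existence and additivity for this particular factorization come out of the induction together.

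The hard part---and what uniqueness really needs---is the following Key Lemma: $\el(xy)=\el(x)+\el(y)$ for \emph{every} $x\in W^{J}$ and $y\in W_{J}$, not just for the factors manufactured above. I would prove it by induction on $\el(y)$. Writing $y=y's$ with $s\in J\cap D_{R}(y)$ and $\el(y')=\el(y)-1$, the inductive hypothesis says the concatenation of reduced words for $x$ and for $y'$ is already a reduced word for $xy'$, so it remains to check $s\notin D_{R}(xy')$. If instead $\el(xy's)<\el(xy')$, the deletion fact removes a single letter from that concatenated word to represent $xy=xy's$. Deleting a letter of the $y'$-block would express $y$ with fewer than $\el(y)$ letters, which is absurd; deleting a letter of the $x$-block yields $xt=x''$ with $t=y's(y')^{-1}\in T\cap W_{J}$ and $\el(x'')<\el(x)$, so $\el(xt)<\el(x)$ for a reflection $t\in T\cap W_{J}$. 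This contradicts $x\in W^{J}$ once the descent condition is upgraded from generators in $J$ to all reflections in $T\cap W_{J}$---the single place where the defining property of the quotient is genuinely exploited. I expect this cross-cancellation step, together with that upgrade, to be the main obstacle.

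Granting the Key Lemma, \emph{uniqueness} is short: for $x\in W^{J}$ we have $\el(xy)=\el(x)+\el(y)\ge\el(x)$ with equality only when $y=e$, so $x$ is the unique minimal-length element of the coset $xW_{J}$. Hence two factorizations $w=x_{1}y_{1}=x_{2}y_{2}$ give $x_{1}W_{J}=x_{2}W_{J}=wW_{J}$, whence $x_{1}=x_{2}$ by minimality and then $y_{1}=x_{1}^{-1}w=y_{2}$. The mirror-image argument, peeling off left descents and using the left quotient $\leftexp{J}{W}$, proves the companion factorization with the parabolic factor on the left; the two are interchanged by $w\mapsto w^{-1}$.
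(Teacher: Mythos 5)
Your proposal cannot be checked against a proof in the paper, because the paper gives none: Fact~\ref{coset} is quoted in the appendix as a standard result (Proposition 2.4.4 of \cite{bb}). Judged on its own, your existence argument and your derivation of uniqueness from the Key Lemma are both correct, but the Key Lemma itself has a genuine hole, and you flag it yourself: the ``upgrade'' from the defining condition of $W^{J}$ (no length drop under right multiplication by a \emph{generator} $s\in J$) to the stronger statement that $\ell(xt)>\ell(x)$ for every \emph{reflection} $t\in T\cap W_{J}$ is exactly what Case B of your induction needs, and it is nowhere proved. The gap cannot be absorbed into your induction on $\ell(y)$: the reflection $t=y's(y')^{-1}$ produced in Case B can have length up to $2\ell(y)-1$ in $W_{J}$, so the inductive hypothesis never applies to it. The upgrade is true, but proving it with the paper's combinatorial toolkit is essentially as hard as the fact itself; the clean proof uses the root system ($x\in W^{J}$ sends each simple root $\alpha_{s}$, $s\in J$, to a positive root, hence sends every positive root of $W_{J}$, a nonnegative combination of these, to a positive root), machinery the paper never introduces.

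The standard proof sidesteps the upgrade by changing what $x$ is: take $x$ to be an element of \emph{minimal length in the coset} $wW_{J}$, rather than an arbitrary element of $W^{J}$. Then your own Case B analysis---$xt$ equals $x$ with one letter deleted, $t\in T\cap W_{J}$---contradicts minimality of $x$ in $xW_{J}=wW_{J}$ immediately, with no upgrade needed. From the Key Lemma for such minimal representatives one gets existence ($w=x\cdot(x^{-1}w)$ with additive lengths), then $x\in W^{J}$, then the identification of $W^{J}$ with the set of minimal-length coset representatives, and finally uniqueness exactly as you argue; so your architecture is salvageable by replacing ``$x\in W^{J}$'' with ``$x$ minimal in $wW_{J}$'' throughout the Key Lemma. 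One further remark: what you prove is $w=w^{J}\cdot w_{J}$ (quotient factor on the left), and that is the correct statement for the paper's definition of $W^{J}$ as the set of elements with no right descents in $J$; the paper's displayed equation $w=w_{J}\cdot w^{J}$ has the factors in the wrong order (in type $A_{2}$ with $J=\{s_{1}\}$, the element $s_{2}s_{1}$ admits no such factorization), so your silent correction is the right reading of the statement.
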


\begin{fact}[Chain property]
Every one-sided quotient is graded with the rank function $\el$:
If $u<w$ in $\iW$ ($W^{J}$), 
there exists a chain in $\iW$ ($W^{J}$)
\[
u=v_{0}< v_{1}< v_{2}< \cd < v_{k}=w
\]
such that 
$\el(v_{i}, v_{i+1})=1$ for all $i$ (thus $k=\eluw$).
Consequently, 
${}^{I}[u, w]^{}\defeq[u, w]\cap {}^{I}W^{}$
$\k{{}^{}[u, w]^{J}\defeq[u, w]\cap {}^{}W^{J}}$
 is graded.
\end{fact}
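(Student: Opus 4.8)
The plan is to collapse the whole statement to one assertion: the left quotient $\iW$, under Bruhat order, is graded with rank function $\ell$. The right-quotient case then follows for free from the length- and order-preserving anti-automorphism $x\mapsto x^{-1}$, which carries $\iW$ bijectively onto $W^I$ (indeed $w\in\iW \iff \ell(sw)>\ell(w)\,\forall s\in I \iff \ell(w^{-1}s)>\ell(w^{-1}) \iff w^{-1}\in W^I$). The final ``consequently'' clause is then automatic: for $u<w$ in $\iW$, the set ${}^I[u,w]=[u,w]\cap\iW$ is precisely the interval $[u,w]$ computed inside the poset $\iW$, and an interval of a graded poset is graded (with rank $\ell-\ell(u)$). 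So the entire content is the gradedness of $\iW$.

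To prove $\iW$ graded it suffices to show that every covering relation of the poset $\iW$ has $\ell$-difference $1$; this reduces to the \emph{Key Claim}: whenever $u<w$ in $\iW$ there is $v\in\iW$ with $u\le v$ and $\ell(v,w)=1$. Indeed, if some cover $x<y$ of $\iW$ had $\ell(x,y)\ge 2$, applying the Key Claim to $x<y$ would produce $v\in\iW$ with $x\le v<y$ and $\ell(v)=\ell(y)-1>\ell(x)$, i.e. a strictly intermediate element of $\iW$, contradicting that $x<y$ is a cover. Once covers are known to have $\ell$-difference $1$, the (shifted) function $\ell$ is a rank function and $\iW$ is graded.

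The engine for the Key Claim is a short lemma: if $w\in\iW$ and $s\in S\cap D_R(w)$ is a \emph{simple} right descent, then $ws\in\iW$. This is a length count: if some $s'\in I$ had $\ell(s'ws)<\ell(ws)$, then $\ell(s'ws)=\ell(w)-2$, whereas $\ell(s'w)=\ell(w)+1$ (because $w\in\iW$) forces $\ell(s'ws)\ge\ell(w)$, a contradiction. I would then prove the Key Claim by induction on $\ell(w)$. Pick a simple right descent $s$ of $w$, so $ws\lessdot w$ and $ws\in\iW$ by the lemma. If $u\le ws$, then $v=ws$ works at once. Otherwise, since $u\le w=(ws)s$ but $u\not\le ws$, the Subword/Lifting Property forces $s\in D_R(u)$ and $us\le ws$; here $us\in\iW$ by the lemma and $us<ws$ (equality would give $u=w$), so the inductive hypothesis applies to the shorter pair $us<ws$ inside $\iW$, and I transport the covering element it supplies back across right multiplication by $s$ to obtain the desired $v\lessdot w$ with $u\le v$ and $v\in\iW$.

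I expect this last ``$s\in D_R(u)$'' case --- transporting the inductively found cover of $ws$ back up past $s$ to a cover of $w$ that still lies in $\iW$ and above $u$ --- to be the main obstacle. It is exactly the point where the one-directional Lifting Property quoted in the text must be promoted to its full ``diamond'' form \cite[Prop.~2.2.7]{bb}, and where the example $w=s_2s_1$ with $I=\{s_1\}$ (whose right cover $s_1\notin\iW$, while the right cover $s_2\in\iW$) shows that the covering element below $w$ cannot be chosen arbitrarily but must be selected so as to remain in the quotient and above $u$.
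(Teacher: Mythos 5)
The paper itself states this Fact without proof (it is a standard result, essentially Deodhar's chain property for quotients, cf.\ \cite[Theorem 2.5.5]{bb}), so your attempt must stand on its own merits. Your scaffolding is correct and matches the standard route: the inverse-map reduction between $\iW$ and $W^{I}$, the reduction of gradedness to ``every cover in $\iW$ has $\el$-difference $1$,'' the lemma that $w\in\iW$ and $s\in D_R(w)$ imply $ws\in\iW$, and the induction split on whether $u\le ws$. But there is a genuine gap, and it sits exactly where you say you ``expect the main obstacle'': the transport step in the second case is never carried out, and it is not a detail --- it \emph{is} the theorem (it is essentially Deodhar's lemma). Worse, as phrased it is false without invoking the standing hypothesis $u\not\le ws$: covers do not transport across right multiplication by $s$ in general. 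In $S_3$ with $I=\{s_1\}$, the element $v'=e$ is a cover of $s_2=ws$ inside $\iW$ (take $w=s_2s_1$, $s=s_1$), yet $v's=s_1\notin\iW$. So a proof that stops at ``I transport the cover back past $s$'' has omitted precisely the mechanism that makes the statement true.

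To close the gap, in the case $u\not\le ws$ (so $s\in D_R(u)$, $us\le v'\lessdot ws$ with $v'\in\iW$ by induction), argue as follows, using the full lifting property \cite[Proposition 2.2.7]{bb} throughout. First, $s\notin D_R(v')$: otherwise, since $s\notin D_R(us)$, lifting applied to the pair $(us,v')$ gives $u=(us)s\le v'<ws$, contradicting $u\not\le ws$. Hence $v'<v's$; lifting applied to $(us,v's)$ gives $u\le v's$, and applied to $(v',w)$ gives $v's\le w$, so $v's$ has the right length and position. Finally $v's\in\iW$: if some $s'\in I$ had $s'v's<v's$, then since $s'v'>v'$ and $(v's)s=v'<v's$, lifting forces either $\el(s'v's)=\el(v's)-2$, which is impossible because $\el(s'v')=\el(v')+1$, or $s'v'=v's$; in the latter case $u\le v's=s'v'$ with $s'\in D_L(s'v')\sm D_L(u)$, and one more application of lifting yields $u\le v'<ws$, again contradicting $u\not\le ws$. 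Thus $v=v's$ is the desired element of $\iW$ with $u\le v<w$ and $\el(v,w)=1$. Note that every step above leans on the hypothesis $u\not\le ws$ and on the two-directional (diamond) form of lifting --- exactly the ingredients your proposal names but does not deploy; until they are written out, the inductive step, and with it the whole proof, is unproven.
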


\begin{fact}\label{cosetpart}
Every lower interval $B(w)$ is a disjoint union of 
left cosets with the same index $\dlw$:
\[
B(w)=\bigcup_{u\le w} W_{D_{L}(w)} u 
\]
Moreover, all cosets in 
\[
\{W_{D_{L}(w)} u \mid u\le w\}
\]
are isomorphic to each other as posets
 (and even as graded posets and as directed graphs).
Moreover, each $W_{D_{L}(w)} u $ is indeed an (left weak) interval 
and a regular graph of the \eh{same} degree.
\end{fact}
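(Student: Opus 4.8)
The plan is to fix $I=\dlw$ and reduce the whole statement to a single assertion: every left coset $\wi u$ with $u\le w$ is isomorphic, via right multiplication by one fixed element, to the parabolic subgroup $\wi$ itself. Once each coset is exhibited as a copy of $\wi$, the claim that the cosets are mutually isomorphic (as posets, as graded posets, and as directed graphs) is automatic, and the ``interval'' and ``regular graph'' assertions become statements about $\wi$.

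First I would set up the partition. Left cosets of $\wi$ partition $W$ by elementary group theory, so it is enough to show $\wi u\sub B(w)$ for each $u\le w$. This is the left half of the argument already recorded for Bruhat cosets: if $u\le w$ and $s\in I$, then either $s\in\dlu$, so $su<u\le w$, or $s\in\dlw\sm\dlu$, and then Lifting Property gives $su\le w$; inducting on a reduced word for $x\in\wi$ yields $xu\le w$. Hence $B(w)=\bigsqcup\wi u$, the disjoint union taken over the distinct cosets meeting $B(w)$. Since $B(w)$ is finite, so is each $\wi u$, whence $\wi$ is a finite Coxeter group with longest element $\wo(I)$ and with $|T(I)|=\el(\wo(I))$ reflections; this finiteness is exactly what gives meaning to the ``interval'' and ``regular graph'' claims.

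Next, for a fixed coset I would pass to its minimal-length representative. By the left analogue of the parabolic decomposition Fact \ref{coset}, the coset $\wi u$ contains a unique $m\in\iW$ of minimal length, and $\wi u=\wi m=\{xm\mid x\in\wi\}$ with $\el(xm)=\el(x)+\el(m)$ for every $x\in\wi$. I then claim that $x\mapsto xm$ is at once an isomorphism of Bruhat posets, of graded posets, of Bruhat graphs, and of left weak orders from $\wi$ onto $\wi m$. The graph and weak-order parts are immediate: if $xm,ym$ both lie in the coset, the candidate reflection $t=(ym)(xm)^{-1}=yx^{-1}$ automatically lies in $T(I)$, so within-coset edges $xm\to ym$ correspond exactly to edges $x\to y$ of $\wi$, and the parallel length computation gives $xm\lel ym\iff x\lel y$; because $\el(xm)=\el(x)+\el(m)$, the rank is merely shifted by $\el(m)$, which also yields the graded-poset isomorphism.

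The one genuinely delicate point, and the step I expect to be the main obstacle, is the Bruhat-order direction $xm\le ym\then x\le y$ in $\wi$, since an a priori Bruhat path in $W$ between two coset elements could leave the coset. Here I would argue by Subword Property. Fix a reduced word $ym=\mathbf{y}\,\mathbf{m}$ with $\mathbf{y}$ reduced for $y$ (letters in $I$) and $\mathbf{m}$ reduced for $m$; the relation $xm\le ym$ then produces a subword, which I split into its $\mathbf{y}$-part $a\in\wi$ and its $\mathbf{m}$-part $b$, so that $xm=ab$ and $b=(a^{-1}x)m$ lies in the same coset with $b\le m$. As $m$ is the unique minimal-length element of its coset, $b\le m$ forces $b=m$, hence $a=x$, and the $\mathbf{y}$-part already displays $x$ as a subword of $\mathbf{y}$, giving $x\le y$ in $\wi$. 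With the isomorphism in hand, each coset is a copy of $\wi$, so all cosets are mutually isomorphic as posets, graded posets, and directed graphs; each equals the left weak interval $[m,\wo(I)m]$; and, being a copy of the Bruhat graph of $\wi$, each is regular of the common degree $|T(I)|=\el(\wo(I))$.
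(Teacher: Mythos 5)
Your proposal is correct, but note that there is no proof in the paper to compare it against: Fact~\ref{cosetpart} sits in the appendix (``Basics on Coxeter groups''), where it is recorded as a known result without argument, like the other facts attributed to the standard references. Judged on its own, your argument is sound and is essentially the standard textbook derivation. The containment $W_{D_L(w)}u\subseteq B(w)$ via the Lifting Property is the same reasoning the paper does write out (for its two-sided cosets $C_w(u)$), and your reduction of everything else to the single map $x\mapsto xm$ from $W_I$ onto $W_Im$, with $m\in{}^{I}W$ the minimal-length representative from the left analogue of Fact~\ref{coset}, is the right organizing idea: once that map is an isomorphism of posets, graded posets, and directed graphs, mutual isomorphism of the cosets, the weak-interval description $[m,w_0(I)m]$, and regularity of degree $|T\cap W_I|=\ell(w_0(I))$ all fall out, using finiteness of $W_I$, which you correctly extract from $W_I=W_Ie\subseteq B(w)$. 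You also correctly isolate the one genuinely delicate point, namely $xm\le ym\Rightarrow x\le y$, since comparability in $(W,\le)$ between coset elements could a priori be witnessed only by paths leaving the coset; your subword-splitting argument (split a subword of $\mathbf{y}\,\mathbf{m}$ representing $xm$ as $ab$ with $a\in W_I$ and $b\le m$, note $b=(a^{-1}x)m\in W_Im$, and use minimality of $\ell(m)$ in the coset to force $\ell(b)=\ell(m)$, hence $b=m$ and $a=x$) is exactly right, and the converse direction is, as you say, immediate from the edge correspondence $xm\to ym\iff x\to y$ given by the reflection $yx^{-1}\in T\cap W_I$ together with the additivity $\ell(xm)=\ell(x)+\ell(m)$. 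The only points left tacit are harmless standard facts that the paper's appendix also takes for granted: that the length and Bruhat order of the parabolic system $(W_I,I)$ agree with the restrictions of those of $W$.
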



\begin{thebibliography}{99}
\bibitem{bb}A. Bj\"{o}rner and F. Brenti, Combinatorics of Coxeter groups, Graduate Texts in Mathematics, vol. 231, Springer-Verlag, New York, 2005.
\bibitem{billey}
S. Billey and V. Lakshmibai, Singular loci of Schubert varieties, Birkh{\"{a}}user Boston, Inc., Boston, MA, 2000.
\bibitem{kobayashi}
M. Kobayashi, \emph{Inequalities on Bruhat graphs, $R$- and Kazhdan-Lusztig polynomials}, J. Comb. Theory Ser. A \textbf{120} (2013), no.2, 470--482.
\end{thebibliography}
\end{document}